\numberwithin{equation}{section}
\def\cb{{\mathcal B}}
\def\ce{{\mathcal E}}
\def\cf{{\mathcal F}}
\def\ch{{\mathcal H}}
\def\cs{{\mathcal S}}
\def\ga{{\mathfrak A}} 
\def\gb{{\mathfrak B}}
\def\gam{{\mathfrak M}}
\def\gq{{\mathfrak Q}}
\def\gs{{\mathfrak S}}
\def\gx{{\mathfrak X}}
\def\bc{{\mathbb C}}
\def\bi{{\mathbb I}}
\def\bl{{\mathbb L}}
\def\bm{{\mathbb M}}
\def\bn{{\mathbb N}}
\def\bp{{\mathbb P}}
\def\br{{\mathbb R}}
\def\bz{{\mathbb Z}}
\def\a{\alpha}
\def\b{\beta}
\def\g{\gamma}  \def\G{\Gamma}
\def\d{\delta}
\def\l{\lambda} \def\L{\Lambda}
\def\s{\sigma} 
\def\t{\tau}
\def\f{\varphi}  \def\F{\Phi}
\def\th{\theta}  
\def\om{\omega} \def\Om{\Omega}
\def\id{\hbox{id}}
\def\ker{\hbox{Ker}}
\newtheorem{thm}{Theorem}[section]
\newtheorem{lem}[thm]{Lemma}
\newtheorem{prop}[thm]{Proposition}
\theoremstyle{definition}
\newtheorem{defin}[thm]{Definition}
\def\ad{\mathop{\rm ad}}
\def\idd{{1}\!\!{\rm I}}
\begin{document}

\title[Wick order]
{Wick order, spreadability and exchangeability for monotone commutation relations}
\author{Vitonofrio Crismale}
\address{Vitonofrio Crismale\\
Dipartimento di Matematica\\
Universit\`{a} degli studi di Bari\\
Via E. Orabona, 4, 70125 Bari, Italy}
\email{\texttt{vitonofrio.crismale@uniba.it}}
\author{Francesco Fidaleo}
\address{Francesco Fidaleo\\
Dipartimento di Matematica\\
Universit\`{a} degli studi di Roma Tor Vergata\\
Via della Ricerca Scientifica 1, Roma 00133, Italy} \email{{\tt
fidaleo@mat.uniroma2.it}}
\author{Maria Elena Griseta}
\address{Maria Elena Griseta\\
Dipartimento di Matematica\\
Universit\`{a} degli studi di Bari\\
Via E. Orabona, 4, 70125 Bari, Italy}
\email{\texttt{maria.griseta@uniba.it}}
\date{\today}

\begin{abstract}

\vskip0.1cm\noindent We exhibit a Hamel basis for the concrete $*$-algebra $\gam_o$ associated to monotone commutation relations realised on the monotone Fock space, mainly composed by Wick ordered words of annihilators and creators. We apply such a result to investigate spreadability and exchangeability of the stochastic processes arising from such commutation relations. In particular, we show that spreadability comes from a monoidal action implementing a dissipative dynamics on the norm closure $C^*$-algebra $\gam=\overline{\gam_o}$. Finally, we determine the structure of the set of exchangeable and spreadable monotone stochastic processes, by showing that both coincide with the stationary ones. \\
{\bf Mathematics Subject Classification}: 60G09, 46L55, 46L30, 46N50.\\
{\bf Key words}:  Noncommutative probability; Spreadable and exchangeable quantum processes; Non commutative dynamical systems; States.
\end{abstract}

\maketitle

\section{introduction}

\label{sec1}
The interest in noncommutative stochastic processes invariant under some distributional symmetries had a huge increase in the last years, and the references listed in \cite{CrFid2, EGK} offer a rich but not exhaustive account. This development is certainly related to the possibility of introducing de Finetti-type theorems (see {\it e.g.} \cite{CF0, K, Kr, St2} and the references cited therein), which possess natural and powerful applications in various fields such as quantum information theory and quantum statistical mechanics. In view of potential interest in the areas of quantum realm above cited, these notes give some new results for invariant stochastic processes coming from some exotic relations between creation and annihilation operators, as the monotone ones.

The investigation of strong ergodic properties for the so called monotone $C^*$-algebra started in \cite{CFL}, where stationary stochastic processes were classified too. A key argument for such achievements was to find a suitable family of generators for the underlying monotone $*$-algebra. Thus, one can naturally wonder if a more detailed study of the algebraic structure may offer a way to describe the monotone stochastic processes invariant under some further distributional symmetries, like spreadability and exchangeability. The reader is referred to \cite{Ka} for a treatise in the commutative setting.

More in detail, it appears convenient to achieve a linear basis for the monotone $*$-algebra, whose elements are put in a convenient way. This was done for Bose or Fermi annihilators and creators in the so called Wick or normal form, in honour of the Italian theoretical physicist Gian Carlo Wick ({\it cf.} \cite{W}), who firstly guessed the gain obtained after writing such products in a suitable way by using the Canonical (Anti)Commutation Relations. Natural applications took place in quantum field theory, and statistical mechanics of huge systems composed by particles obeying to Bose and Fermi statistics. In addition, it has been recorded an enormous use in managing the forthcoming standard models, including quantum electrodynamics and chromodynamics. The idea of the normal order consists in writing a generic word given by annihilators and creators as a linear combinations of words, where all creators appear to the left of the annihilators. The words in normal form then generate algebraically the involved $*$-algebra of operators, and allow a great simplification in computing the matrix elements of fields and observables.

Providing normal ordering to words of annihilators and creators looks a bit more complicated, with respect to the above cited cases, for some more general exotic commutation relations naturally appearing. As an example, we mention the computations in \cite{BKS} concerning the so called $q$-deformed commutation relations.

Nevertheless, in view to potential applications to various fields of mathematics and physics, it seems natural to address the question of putting in normal form products of annihilators and creators satisfying more complicated commutation relations, when possible.

The first goal of the present paper consists in finding  the normal form for monotone (or equivalently anti-monotone) commutation relations. It is performed by finding a basis with almost all elements given by Wick ordered words. Contrarily to other well known cases such as boson, fermion and even $q$-deformed, this feature is not easily predictable, since it does not directly come from a repeated use of the commutation relations.

Coming back to general features of quantum probability, very recently ({\it cf.} \cite{CrFid, CrFid2}) it was shown a one-to-one correspondence between unitarily equivalent classes of quantum stochastic processes on the index set $J$, for the sample $C^*$-algebra $\ga$, and states on the free product $C^*$-algebra $*_J\ga$. Suppose further that $g:J\to J$ is a general permutation of $J$ ({\it i.e.} a one-to-one map of $J$ onto $J$). Then a $*$-automorphism $\a_g$ of $*_J\ga$ is naturally induced by applying $g$ on indices in $J$. In this way, stochastic processes which are invariant after permuting the indices by the point action of $g$ correspond to those states on $*_J\ga$ which are invariant under the transposed action of $\a_g$ in a natural way. To be more precise, this means that exchangeable or stationary (provided $J$ coincides with $\bz$ for the latter) stochastic processes correspond to symmetric or shift invariant states on the free product $C^*$-algebra, respectively. As a consequence, the properties of stochastic processes, invariant or not under distributional symmetries, can be achieved studying the corresponding, invariant or not, states. This can be performed on any concrete $C^*$-algebra, seen as the quotient of the free product $C^*$-algebra via the universal property of the latter. This transfer entails the possibility to handle stationarity or exchangeability using standard results of ergodic theory, see {\it e.g.} \cite{BR1}. As an example, the ergodic decomposition of invariant states can be exploited to treat the quantum analogue of some celebrated results in classical probability, like de Finetti-type theorems. For a better understanding on the subject, the reader can compare the analysis in \cite{CF0, St2} with the content of Section 2 in \cite{CrFid2}.

Besides exchangeability and stationarity, there is another natural symmetry, called spreadability (see {\it e.g.} \cite{Ka}), inherited from classical probability which can be investigated in the setting of quantum stochastic processes. Very recently, spreadability has been described in some detail in \cite{EGK} from the viewpoint of category theory. Therefore, it appears natural to get a description of spreadable quantum stochastic processes in terms of invariance of states under the action of some natural algebraic object on the involved $C^*$-algebra. Here, there is the second goal of the present paper. By a direct application of the Wick order for monotone commutation relations, we show how spreadability is provided by a suitable monoid which acts by means of $*$-endomorphisms on the monotone $C^*$-algebra. Up to our knowledge, this seems the first construction in literature where such a symmetry comes out from an explicit implementation of a unital semigroup action.

After recalling some notions, in Proposition \ref{partial} of Section \ref{sec2} we remark that spreadable stochastic processes, or equivalently spreading invariant states, are those whose joint distributions are invariant under the natural action of all right and left hand side partial shifts. As a consequence, if the monoid algebraically generated by all of such partial shifts acts on the $C^*$-algebra by completely positive maps, the structure of spreadable stochastic processes is completely determined once one knows how to classify the relative invariant states.

The main topic of the present paper is the framework arising from the representation of the monotone commutation relations on the monotone Fock space, and then it deals with concrete unital $C^*$-algebra $\gam$ generated by monotone annihilators and creators. Once again, we recall that our monotone stochastic processes correspond precisely to elements in the set of states $\cs(\gam)$.

Section \ref{sec3} is devoted to determine a Hamel basis for the monotone $*$-algebra $\gam_o$ algebraically generated by monotone annihilators and creators. We will show that most of the elements therein are Wick ordered words. Although this result, not automatically induced by the anomalous commutation relations in the monotone setting, should possess an interest in itself for forthcoming potential applications in physics and information theory, as previously suggested, here we present some direct applications.

Indeed, in Section \ref{sec4} we exploit the basis to realise the above cited action of partial shifts on the monotone $C^*$-algebra. More in detail, we show that the monoid generated by such maps acts on $\gam_o$ as $*$- endomorphisms, which can be extended by continuity to the whole $\gam$. In the final part of the section, we classify the spreading invariant states and show they coincide with the stationary ones. They are indeed those connecting two ergodic states ({\it cf.} \cite{CFL}, Theorem 5.12), which are thus the extreme points of a segment.

In Section \ref{sec5}, we deal with the exchangeable monotone stochastic processes. As already pointed out in \cite{CFL}, the action of the permutation group $\bp_\mathbb{Z}$ here is not natural as in other well known cases like boson, fermionic, free or boolean, since $\bp_\bz$ does not naturally act as Bogoliubov automorphisms of the monotone algebra. However,
by using the Hamel basis previously found, for each permutation $\s\in\bp_\bz$ we can construct a map $T_\s:\gam_o\to\gam_o$ which implements $\s$. Unfortunately, such maps are not positive, and it seems a difficult task to show their boundedness. Thus, it is unknown if they extend to (bounded) maps on the whole monotone $C^*$-algebra.
As distributional symmetries deal only with the algebraic structure,
yet we have enough informations to provide the structure of exchangeable monotone states in Proposition \ref{symm}.

\section{preliminaries}
\label{sec2}

The following lines are mainly devoted to recall some notations and definitions frequently used in the sequel. If not otherwise specified, any $C^*$-algebra appearing below will be supposed unital. In addition, all morphisms are supposed to be $*$-morphisms.

\subsection{$C^*$-dynamical systems}
A $C^*$-dynamical system is the triplet $(\ga, M,\Gamma)$, where $\ga$ is a $C^*$-algebra with unit $\idd$, $M$ is a monoid, and finally $\G$ denotes a
representation $g\in M\mapsto \G_g$ of $M$ by completely positive identity preserving maps of $\ga$.

When we replace the monoid by a group, the $C^*$-dynamical system $(\ga, G, \a)$ is made of the unital $C^*$-algebra $\ga$, the group $G$, and the representation $\a$ of $G$ into the group of the $*$-automorphisms ${\rm Aut}(\ga)$ of $\ga$. In absence of bijections, one speaks of dissipative dynamics, whereas the automorphisms usually describe reversible, or hamiltonian whenever $G=\br$, flows, see {\it e.g.} \cite{BR1}.

Denote $\cs(\ga)$ the set of the states ({\it i.e.} the positive normalised functionals) of $\ga$. The part $\cs_M(\ga)\subset\cs(\ga)$ consists of the $M$-invariant states:
\begin{equation}
\label{v1}
\cs_M(\ga):=\big\{\f\in\cs(\ga)\mid\f\circ \G_g=\f,\,\,g\in M\big\}.
\end{equation}
It is $*$-weakly compact, and its extremal points $\ce_M(\ga):=\partial\cs_M(\ga)$
are called {\it ergodic states}.

Among the groups we deal with, we mention that of all finite permutations $\bp_J$ of an arbitrary index set $J$ given by
$$
\bp_J:=\bigcup\big\{\bp_\L\mid\L\,\text{finite subset of}\,J\big\},
$$
where $\bp_\L$ is the symmetric group associated to the finite set $\L$. We also mention the group generated by the one step shift $\t(i):=i+1$
of the integers $\bz$, which is isomorphic to $\bz$ itself.

We also consider the monoid $(\bl_\bz,\circ)$ consisting of all strictly increasing maps $g:\bz\to\bz$ under the composition operation, and its submonoid $(\bi_\bz,\circ)$ generated by the compositions of all partial shifts, see Section \ref{scfpr}. Very often, for the involved maps $f,g:\bz\to\bz$, we drop the composition symbol simply by writing that as a product: $f\circ g\equiv fg$. Accordingly, we also write the relative monoids without pointing out the composition. For example, $(\bl_\bz,\circ)$ will be denoted simply as $\bl_\bz$.

\subsection{Stochastic processes}
\label{sec2.2}
Recall that for a given set $J$ and unital $C^*$-algebras $\{\ga_j\}_{j\in J}$, their unital free product $C^*$-algebra $\ast_{j\in J} \ga_j$ is the unique unital $C^*$-algebra, together with  unital monomorphisms $i_j:\ga_j\rightarrow \ast_{j\in J} \ga_j$ such that for any unital $C^*$-algebra $B$ and unital morphisms $\F_j:\ga_j\rightarrow B$, there exists a unique unital homomorphism $\F:\ast_{j\in J} \ga_j\rightarrow B$ making commutative the following diagram
\begin{equation*}
\xymatrix{ \ga_j \ar[r]^{i_j} \ar[d]_{\F_{j}} &
\ast_{j\in J} \ga_j \ar[dl]^{\quad\F\quad\quad \quad\quad.} \\
B}
\end{equation*}
In the present paper, we always deal with unital free product $C^*$-algebras based on a single unital $C^*$-algebra $\ga$, the algebra of the samples, called {\it the free product
$C^*$-algebra}, and denoted simply as
$\ast_{J} \ga$. We refer the reader to \cite{VDN, CrFid} for further details.

We note that in the case $J=\bz$, the group $\bp_\bz$ as well as the group $\bz$ generated by the one step shift, act in a natural way on $\ast_{\bz} \ga$.

A (quantum) {\it stochastic process}, labelled by the index set $J$ and determined up to unitary equivalence, is a quadruple
$\big(\ga,\ch,\{\iota_j\}_{j\in J},\Om\big)$, where $\ga$ is a $C^{*}$-algebra, $\ch$ is an Hilbert space,
the $\iota_j$ are $*$-homomorphisms of $\ga$ in $\cb(\ch)$, and
$\Om\in\ch$ is a unit vector, cyclic for  the von Neumann algebra
$M:=\bigvee_{j\in J}\iota_j(\ga)$ naturally acting on $\ch$.

Fix $n\in\bn$, $j_1,\ldots, j_n\in J$ with different contiguous indices, and  $A_{j_1},\ldots,A_{j_n}\in\ga$. The joint probability $p$
of the stochastic process for the element $A_{j_1}*\cdots*A_{j_n}\in*_J\ga$
is given by
$$
p\big(A_{j_1},\ldots,A_{j_n}\big)=\big\langle\iota_{j_1}(A_{j_1})\cdots\iota_{j_n}(A_{j_n})\Om, \Om\big\rangle\,.
$$
In Theorem 2.3 of \cite{CrFid2}, it was proven that there is a one-to-one correspondence between states on $\ast_{J} \ga$ and quantum stochastic processes. More in detail, one sees that the quadruple $\big(\ga,\ch,\{\iota_j\}_{j\in J},\Om\big)$ determines a unique state $\f\in\cs\big(\ast_{J}\ga\big)$, and a representation $\pi$ of $\ast_{J} \ga$ on the Hilbert space $\ch$ such that $(\pi,\ch,\Om)$ is the Gelfand-Naimark-Segal (GNS fir short) representation of the state $\f$. Conversely, each state $\f\in\cs\big(\ast_{J}\ga\big)$ defines a unique stochastic process, up to unitary equivalence, just by looking at its GNS representation. The interested reader is addressed to \cite{CrFid2} for more details on quantum stochastic processes, their joint probabilities and relations with the classical ones.

A stochastic process is said to be {\it exchangeable} if, for each $g\in\bp_J$, $n\in\bn$, $j_1,\ldots, j_n\in J$, $A_1,\ldots, A_n\in\ga$,
\begin{equation}
\label{exzc}
\langle\iota_{j_1}(A_1)\cdots\iota_{j_n}(A_n)\Om,\Om\rangle
=\langle\iota_{g(j_{1})}(A_1)\cdots\iota_{g(j_{n})}(A_n)\Om,\Om\rangle\,.
\end{equation}
Suppose that $J=\bz$. The process is said to be {\it stationary} if  for each $n\in\bn$, $j_1,\ldots j_n\in\bz$, $A_1,\ldots A_n\in\ga$,
$$
\langle\iota_{j_1}(A_1)\cdots\iota_{j_n}(A_n)\Om,\Om\rangle
=\langle\iota_{j_{1}+1}(A_1)\cdots\iota_{j_{n}+1}(A_n)\Om,\Om\rangle\,.
$$
Fix now the monoid $\bl_\bz$.
It was argued in \cite{EGK} that it is the starting point to manage the quantum generalisation of the notion of spreadability for commutative stochastic processes ({\it cf.} \cite{Ka}).
Indeed, a stochastic process is then said to be {\it spreadable} if for each $n\in \mathbb{N}$, $j_1,j_2,\ldots, j_n$ in $\mathbb{Z}$, $A_1,\ldots, A_n\in\ga$, and $g\in\bl_\bz$, one has
\begin{equation}
\label{spre}
\langle\iota_{j_1}(A_1)\cdots\iota_{j_n}(A_n)\Om,\Om\rangle
=\langle\iota_{g(j_{1})}(A_1)\cdots\iota_{g(j_{n})}(A_n)\Om,\Om\rangle\,.
\end{equation}
From now on, we take $\bz$ as the common index set and note that exchangeability and stationarity are respectively the strongest and the weakest among them. Moreover, contrarily to the classical case, spreadability in general does not imply exchangeability, see \emph{e.g.} \cite{K}.

After arguing that stochastic processes labelled by the index set $J=\bz$, and involving the algebra of the sample $\ga$, correspond to states on the free product algebra
$\ast_{J} \ga$, we also note ({\it cf.} Theorem 2.3 of \cite{CrFid2}) that exchangeable or stationary stochastic processes give rise to {\it symmetric} or {\it shift invariant} states on
$\ast_{J} \ga$, respectively.

As noticed in Remark 3.5 of \cite{CrFid}, some particular classes of processes, like those arising from the Canonical Commutations or Anticommutation Relations, are associated to classes of states coming from a quotient $\gq=\ast_{J} \ga/\ker(\pi)$, $\pi:\ast_{J}\ga\to\gq$ being a suitable $*$-epimorphism. Therefore, it would be of interest to study the invariance properties of such processes directly on the $C^*$-algebra $\gq$. Following this line, we shall see below that, for the monotone case, spreadability is indeed associated to a representation of an appropriate monoid on the monotone $C^*$-algebra $\gam$.

\subsection{Spreadability}
\label{scfpr}

The following preliminary definitions and results will be often used, and allow to manage spreadable stochastic processes in a suitable way in the sequel.

Let us take $h\in\mathbb{Z}$. The {\it right hand side partial shift} based on $h$ is the one-to-one map $\theta_h:\mathbb{Z}\rightarrow \mathbb{Z}$ such that
$$
\theta_h(k):=\left\{\begin{array}{ll}
                      k & \text{if}\,\, k<h\,,\\
                      k+1 & \text{if}\,\, k\geq h\,.
                    \end{array}
                    \right.
$$
The {\it left hand side partial shift} based on $h$ is the one-to-one $\psi_h:\mathbb{Z}\rightarrow \mathbb{Z}$ such that
$$
\psi_h(k):=\left\{\begin{array}{ll}
                      k & \text{if}\,\, k>h\,, \\
                      k-1 & \text{if}\,\, k\leq h\,.
                    \end{array}
                    \right.
$$
Denote $\bi_\bz\subset \bl_\bz$ the submonoid generated by all forward and backward partial shifts $\{\th_h\}_{h\in\bz}$ and $\{\psi_h\}_{h\in\bz}$. As all powers $\t^k$, $k\in\bz$ of the shift on $\bz$ are contained in $\bl_\bz$, it is easy to check the following relations
\begin{align}
\begin{split}
\label{pbfs}
&\t^k\theta_l\t^{-k}=\theta_{k+l}\,,\\
&\t^k\psi_l\t^{-k}=\psi_{k+l}\,,\,\,k,l\in\bz\,,
\end{split}
\end{align}
exploited later on.

From now on, if $m,n\in\bz$ and $m<n$, the subset $\{m,m+1,\ldots, n\}$ will be simply denoted by $[m,n]$. The next result relates the action of increasing maps on $[m,n]\subset\mathbb{Z}$ with the actions of partial shifts on the same subsets.
\begin{prop}
\label{partial}
For each finite subset $[m,n]\subset\bz$ and $l:\bz\to\bz$ defining a subsequence of $\bz$, there exists $r=r_{[m,n],l}\in\bi_\bz$ such that
$l([m,n])=r([m,n])$.
\end{prop}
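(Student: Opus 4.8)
The plan is to construct $r$ explicitly as a finite composition of partial shifts that agrees with $l$ pointwise on $[m,n]$. First I would record a reduction: every partial shift $\theta_h,\psi_h$ is strictly increasing, hence so is any $r\in\bi_\bz$; since $l$ is strictly increasing as well, the set equality $l([m,n])=r([m,n])$ is equivalent to the pointwise equalities $r(k)=l(k)$ for all $k\in[m,n]$. Writing $a_i:=l(m+i)$ for $0\le i\le n-m$, the $a_i$ form a strictly increasing integer sequence, and the task becomes to steer the consecutive block $m,m+1,\dots,n$ onto the prescribed values $a_0<a_1<\dots<a_{n-m}$ using only the maps $\theta_h,\psi_h$.

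Next I would isolate two elementary moves. On $[m,n]$ the map $\theta_m$ acts as $k\mapsto k+1$ while $\psi_n$ acts as $k\mapsto k-1$; composing $\theta_{m},\theta_{m+1},\dots$ (respectively $\psi_n,\psi_{n-1},\dots$), each time based at the current endpoint of the image, realises any integer translation of the block while keeping it a set of consecutive integers. I would apply these first to bring the left endpoint onto its target, so that after this stage $m+i\mapsto a_0+i$ for every $i$.

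The core of the argument is a left-to-right gap insertion. Set $g_t:=a_{t+1}-a_t\ge 1$ and process $t=0,1,\dots,n-m-1$, maintaining the invariant that, before treating the $t$-th gap, the images of $m,\dots,m+t$ already equal $a_0,\dots,a_t$ while the images of $m+t+1,\dots,n$ form the consecutive block $a_t+1,a_t+2,\dots$. To widen the $t$-th gap I would apply $\theta_{a_t+1}$ exactly $g_t-1$ times: each application fixes every value $\le a_t$ (hence all the already-placed left-hand images) and raises every value $\ge a_t+1$ by one, so after $g_t-1$ steps $m+t+1\mapsto a_t+g_t=a_{t+1}$ and the remaining tail becomes $a_{t+1}+1,a_{t+1}+2,\dots$, which is exactly the invariant at stage $t+1$. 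After the last gap all images coincide with the $a_i$, and the total map $r$, being a finite composition of partial shifts, lies in $\bi_\bz$ and satisfies $r(k)=l(k)$ on $[m,n]$; hence $l([m,n])=r([m,n])$.

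I expect the only delicate point to be the bookkeeping in the inductive step: checking that inserting one gap neither perturbs the images already fixed to the left nor destroys the consecutive-block structure of the tail. This is precisely what forces the choice of base point $a_t+1$ for the forward partial shift, since basing it one unit above the last correctly placed value is what leaves the left part frozen while translating the tail rigidly. Everything else is routine, including the translations and the observation that the identity is available as the empty composition when $l$ already coincides with the identity on $[m,n]$.
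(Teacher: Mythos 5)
Your construction is correct and is essentially the paper's own proof: the paper likewise first translates the block so that its left endpoint reaches $l(m)$ (via powers of $\theta_m$ when $l(m)\geq m$, or of $\psi_n$ when $l(m)<m$), and then inserts the gaps left to right using $\theta_{l(m+t)+1}^{\,l(m+t+1)-l(m+t)-1}$, which is exactly your $\theta_{a_t+1}^{\,g_t-1}$. Your preliminary reduction from set equality to pointwise equality and the explicit inductive invariant are simply a more detailed write-up of the same composition of partial shifts.
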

\begin{proof}
We distinguish two cases. First, $l(m)\geq m$. In this case,
$$
r=\th_{l(n-1)+1}^{l(n)-l(n-1)-1}\cdots\th_{l(m)+1}^{l(m+1)-l(m)-1}\th_m^{l(m)-m}
$$
is one of such elements $r_{[m,n],l}$ of $\bi_\bz$.

Suppose now $l(m)<m$. One first applies $\psi_n^{m-l(m)}$ to $[m,n]$, obtaining
$$
\psi_n^{m-l(m)}([m,n])=[l(m),l(m)+n-m]\,.
$$
Then one iterates the same arguments as above to $[l(m),l(m)+n-m]$.
\end{proof}
In view of \eqref{spre} and Proposition \ref{partial}, spreadable stochastic processes are exactly those whose joint distributions are invariant under all partial shifts. In addition,
suppose for example that the partial shifts act on our model $C^*$-algebra $\gb$ by $*$-endomorphisms
$$
g\in \mathbb{I}_\mathbb{Z}\mapsto \G_g\in \text{End}(\gb)\,.
$$
In this case, a state $\f$ is said {\it spreading invariant} if as usual, for each $g\in\mathbb{I}_\mathbb{Z}$, one has $\f\circ \G_g= \f$. Thus, a spreadable stochastic process uniquely arises from a spreading invariant state.

\noindent Suppose that also the permutations $\bp_\bz$ act as a group of $*$-automorphisms on $\gb$. We note that, if one takes a finite part $[m,n]\subset\bz$ and an element $g\in\bi_\bz$, there exists a (not unique) element $\s_{[m,n],g}\in\bp_\bz$ such that
\begin{equation*}
\s_{[m,n],g}([m,n])=g([m,n])\,.
\end{equation*}
In addition, for the shift $\t$ one has $\t([m,n])=\th_j([m,n])$ for each $j\leq m$.

As a consequence of the previous discussion, it follows for the $*$-weakly compact convex sets of states as in \eqref{v1}, and describing exchangeable, spreadable and stationary processes,
\begin{equation}
\label{scaprz}
\cs_{\bp_\bz}(\gb)\subset\cs_{\bi_\bz}(\gb)\subset\cs_{\bz}(\gb)\,.
\end{equation}

\section{a basis for the monotone $*$-algebra}
\label{sec3}

We begin by recalling some useful features, the reader being addressed to \cite{CFL,CFL2, Lu, Mur} for further details.

For $k\geq 1$, denote $I_k:=\{(i_1,i_2,\ldots,i_k) \mid i_1< i_2 < \cdots <i_k, i_j\in \mathbb{Z}\}$. The discrete monotone Fock space is the Hilbert space $\cf_m:=\bigoplus_{k=0}^{\infty} \ch_k$, where for any $k\geq 1$, $\ch_k:=\ell^2(I_k)$, and $\ch_0=\mathbb{C}\Om$, $\Om$ being the Fock vacuum. Borrowing the terminology from the physical language, we call each
$\ch_k$ the $k$th-particle space and denote by $\cf^o_m$ the total set of finite particle vectors in $\cf_m$, that is
$$
\cf^o_m:=\bigg\{\sum_{n=0}^{\infty} c_n\xi_n \mid \xi_n\in \ch_n,\,\, c_n\in \mathbb{C}\,\,\, \text{s.t.}\,\, c_n=0\,\,\, \text{but a finite set}  \bigg\} \,.
$$

Let $(i_1,i_2,\ldots,i_k)$ be an increasing sequence of integers. The generic element of the canonical basis of $\cf_m$ is denoted by $e_{(i_1,i_2,\ldots,i_k)}$, with the convention $e_\emptyset=\Om$. Very often, we write $e_{(i)}=e_i$ to simplify the notations.
The monotone creation and annihilation operators are respectively given, for any $i\in \mathbb{Z}$, by
\begin{equation}
\label{crea}
a^\dagger_i e_{(i_1,i_2,\ldots,i_k)}:=\left\{
\begin{array}{ll}
e_{(i,i_1,i_2,\ldots,i_k)} & \text{if}\, i< i_1\,, \\
0 & \text{otherwise}\,, \\
\end{array}
\right.
\end{equation}
\begin{equation}
\label{anni}
a_ie_{(i_1,i_2,\ldots,i_k)}:=\left\{
\begin{array}{ll}
e_{(i_2,\ldots,i_k)} & \text{if}\, k\geq 1\,\,\,\,\,\, \text{and}\,\,\,\,\,\, i=i_1\,,\\
0 & \text{otherwise}\,. \\
\end{array}
\right.
\end{equation}
One can check that both $a^\dagger_i$ and $a_i$ have unital norm (see {\it e.g.} \cite{Boz}, Proposition 8), are mutually adjoint, and satisfy the following relations
\begin{equation*}
\begin{array}{ll}
  a^\dagger_ia^\dagger_j=a_ja_i=0 & \text{if}\,\, i\geq j\,, \\
  a_ia^\dagger_j=0 & \text{if}\,\, i\neq j\,.
\end{array}
\end{equation*}
In addition, the following commutation relation
\begin{equation}
\label{iden}
a_ia^\dagger_i=I-\sum_{k\leq i}a^\dagger_k a_k
\end{equation}
is also satisfied, where the sum is meant in the strong operator topology, see \cite{CFL}, Proposition 3.2. We point out the following facts. First, \eqref{iden} is merely a consequence of the definitions \eqref{crea} and \eqref{anni}. Second, it falls in a wider class of "commutation relations" as established in Corollary 3.2 of \cite{BS}, without determining the structure of the algebras that generate them.

We denote by $\gam$ and $\gam_o$ the concrete unital
$C^*$-algebra, together with its dense unital $*$-algebra generated by the annihilators $\{a_i\mid i\in\mathbb{Z}\}$, acting on the monotone Fock space. Both algebras are canonically $*$-subalgebras of $\cb(\cf_m)$.
\begin{defin}
\label{deflampi}
A word $X$ in $\gam_o$ is said to have a $\lambda$-\textbf{form} if there are $m,n\in\left\{  0,1,2,\ldots
\right\}$ and $i_1<i_2<\cdots < i_m, j_1>j_2> \cdots > j_n$ such
that
$$
X=a_{i_1}^{\dagger}\cdots a_{i_m}^{\dagger} a_{j_1}\cdots a_{j_n}\,,
$$
with $X=I$, that is the empty word if $m=n=0$. Its length is $l(X)=m+n$.

A word $X$ is said to have a $\pi$-\textbf{form} if there are $m,n\in\left\{0,1,2,\ldots
\right\}$, $k\in\mathbb{Z}$, $i_1<i_2<\cdots < i_m, j_1>j_2> \cdots > j_n$ such that $i_m<k>j_1$ and
$$
X=a_{i_1}^{\dagger}\cdots a_{i_m}^{\dagger} a_{k}a_{k}^{\dagger} a_{j_1}\cdots a_{j_n}\,.
$$
The length is now $l(X)=m+2+n$.
\end{defin}
\vskip.3cm
As explained by lemmata in Section 5 of \cite{CFL}, any word in $\gam_o$ can be expressed by words in $\l$-form or in $\pi$-form. Thus, each element in this $*$-algebra is a finite linear combination of $\l$-forms and $\pi$-forms. We are going to prove that the linear structure of
$\gam_o$ provides a way to drastically reduce such generators, by producing a genuine basis. Almost all of the elements in this basis are in fact words with the creators on the left and annihilators on the right hand side. In other words, we are yielding a unique expression for each element of $\gam_o$ where summands are mostly in the so called {\it Wick}, or {\it normal} {\it form}.

Let $\L$ be the index set such that $\{X_\l\}_{\l\in\L}$ gives the totality of the $\l$-forms, that is
\begin{equation}
\label{xlambda}
X_\l=a_{i_1^{(\l)}}^{\dagger}\cdots a_{i_{m(\l)}^{(\l)}}^{\dagger} a_{j_1^{(\l)}}\cdots a_{j_{n(\l)}^{(\l)}}\,,
\end{equation}
for $i_1^{(\l)}<i_2^{(\l)}<\cdots < i_{m(\l)}^{(\l)}, j_1^{(\l)}>j_2^{(\l)}> \cdots > j_{n(\l)}^{(\l)}$, $m(\l),n(\l)\geq 0$, where, as usual, $m(\l)=n(\l)=0$ corresponds to the identity $I$. Since all of the $\l$-forms like $a^\dag_ia_i$ are in one-to-one correspondence with $\bz$, then $\bz\subset\L$  in a natural way. After this identification, we put $\G:=\L\setminus\bz$.

The following results are the main tools to construct our basis.
\begin{lem}
\label{linind}
The family $\{X_\l\}_{\l\in\G}\bigcup\{a_ia^\dag_i\}_{i\in\bz}\subset\gam_o$ is a linearly independent set.
\end{lem}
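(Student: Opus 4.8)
The plan is to show that any finite linear combination $T=\sum_{\lambda\in F}c_\lambda X_\lambda+\sum_{i\in G}d_i\,a_ia_i^\dagger$, with $F\subset\G$ and $G\subset\bz$ finite, which vanishes as an operator on $\cf_m$, must have all coefficients zero; everything will be read off from the action on the canonical basis. For a finite $S\subset\bz$ I write $e_S$ for the basis vector indexed by the increasing arrangement of $S$ (so $e_\emptyset=\Om$), and I record a $\l$-form by its \emph{creation set} $A=\{i_1^{(\l)}<\cdots\}$ and \emph{annihilation set} $B=\{j_1^{(\l)}>\cdots\}$, writing it $X_{(A,B)}$. Using \eqref{crea}--\eqref{anni} one checks that $X_{(A,B)}$ maps each basis vector to a basis vector or to $0$, and that $X_{(A,B)}e_\alpha=e_\beta\neq0$ exactly when $\alpha=B\sqcup\gamma$ and $\beta=A\sqcup\gamma$ for a common top set $\gamma$ with $\min\gamma>\max(A\cup B)$. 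Hence $X_{(A,B)}$ has nonzero entries only on the diagonal when $A=B$ (it is then the projection $P_A$ onto $\overline{\spn}\{e_{A\sqcup\gamma}\}$), and only off the diagonal when $A\neq B$. By \eqref{iden}, $a_ia_i^\dagger=\idd-\sum_{k\le i}a_k^\dagger a_k$ is the diagonal projection $Q_i$ onto $\overline{\spn}\big(\{\Om\}\cup\{e_\alpha:\min\alpha>i\}\big)$.

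Since $T=0$, all its matrix entries vanish, and the off-diagonal entries involve only the off-diagonal forms $X_{(A,B)}$, $A\ne B$, so I would treat these first. Fixing such a form and testing $\langle T e_B,e_A\rangle=0$, the description above shows that the forms contributing to this entry are precisely the \emph{top-truncations} $(A\setminus\gamma,B\setminus\gamma)$ obtained by deleting a common top set $\gamma$; each has length $l(X_{(A,B)})-2|\gamma|$, strictly smaller unless $\gamma=\emptyset$, and each is again off-diagonal because $A\neq B$. Thus $\langle T e_B,e_A\rangle=c_{(A,B)}+\sum_{\gamma\neq\emptyset}c_{(A\setminus\gamma,B\setminus\gamma)}$, and an induction on the length $l(X_{(A,B)})$ — the shortest off-diagonal forms in $F$ having no proper truncation lying in $F$ — forces $c_{(A,B)}=0$ for every off-diagonal $\l$-form.

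Once the off-diagonal coefficients are gone, $T=0$ reduces to the diagonal identity $\sum_{A}c_{(A,A)}P_A+\sum_{i\in G}d_iQ_i=0$, where every $A$ occurring is either empty or of size $\ge2$, since the singletons $a_i^\dagger a_i$ are excluded from $\G$. I would first test on one-particle vectors $e_{\{p\}}$, which cannot bottom-start with any $A$ of size $\ge2$; this leaves $c_{(\emptyset,\emptyset)}+\sum_{i\in G}d_i\,\mathbf 1[p>i]=0$ for all $p\in\bz$. Letting $p\to-\infty$ gives $c_{(\emptyset,\emptyset)}=0$, and the finite differences in $p$ isolate and kill each $d_i$ — it is exactly the absence of the singleton projections from the family that makes the $Q_i$ detectable here. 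Finally, testing the remaining relation on $e_A$ and using that $P_{A'}$ fires on $e_A$ iff $A'$ is a bottom-segment of $A$, a second induction, now on $|A|$, yields $c_{(A,A)}=0$ for all remaining diagonal forms.

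The one genuine subtlety, and the reason the statement is not automatic, is this ``absorption'' ambiguity: a single matrix entry is shared by a whole chain of forms differing by a common top (resp. bottom) segment. I expect this to be the main obstacle, and I resolve it by ordering the forms by length for the off-diagonal part and by $|A|$ for the diagonal part, so that in each case the targeted form is the unique maximal contributor and the resulting triangular system can be solved from the bottom up. The explicit identification of $a_ia_i^\dagger$ with the cofinite diagonal projection $Q_i$ via \eqref{iden} is what lets these generators be separated from the genuine Wick words.
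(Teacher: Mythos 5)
Your proof is correct, and it follows the same basic strategy as the paper --- reading coefficients off matrix entries against the canonical basis, and using one-particle vectors to separate the identity and the trivial $\pi$-forms $a_ia_i^\dag$ --- but your treatment of the $\l$-forms of positive length is genuinely finer, and in fact the extra care is needed. The paper proceeds in the reverse order: it first kills the coefficient of $\idd$ by testing on a one-particle vector $e_{(p_o\wedge k_o)-1}$ lying below every index in sight, then kills the coefficients of the $a_ka_k^\dag$ by testing on the vectors $e_{k_i+1}$ in increasing order (your finite-difference argument with the projections $Q_i$ is the same computation), and finally, for the reduced relation $\sum_\l \b_\l X_\l=0$, pairs each $X_\l=a_{i_1}^{\dagger}\cdots a_{i_m}^{\dagger}a_{j_1}\cdots a_{j_n}$ with $\xi_\l=e_{(j_n,\ldots,j_1)}$, $\eta_\l=e_{(i_1,\ldots,i_m)}$ and asserts that $\langle X\xi_\l,\eta_\l\rangle=\b_\l$, i.e.\ that all cross terms vanish. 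That assertion is not literally true, precisely because of the ``absorption'' phenomenon you isolate: for example $a_1^\dag$ and $a_1^\dag a_2^\dag a_2$ are both legitimate members of the family (creation and annihilation indices of a $\l$-form may overlap), and both send $e_{(2)}$ to $e_{(1,2)}$, so testing the longer form picks up the coefficient of the shorter one as well. Your classification of the contributors to a fixed matrix entry $\langle Te_B,e_A\rangle$ as exactly the common top-truncations $(A\setminus\gamma,B\setminus\gamma)$, followed by the induction on length for the off-diagonal part and on $|A|$ for the diagonal part, is the triangularity argument that the paper's final step implicitly relies on but does not spell out; solving the resulting triangular system from the shortest forms upward is what makes the conclusion $\b_\l=0$ for all $\l$ legitimate. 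So your proposal is not only correct but is, strictly speaking, a complete repair of the published argument, at the modest cost of organizing the forms by length before extracting coefficients.
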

\begin{proof}
Let us take the finite linear combination
$$
X:=\sum_{\l\in H} \b_\l X_\l + \sum_{k\in K} \gamma_ka_ka^\dag_k\,,
$$
where $H$ and $K$ are finite parts of $\G$
and $\mathbb{Z}$, respectively.

We first suppose there exists $\l_o\in H$ such that $X_{\l_o}=I$. In this case, $X=0$ entails $\b_{\l_o}=0$. Indeed, for any $\l\in H \backslash \{\l_o\}$ and $X_\l$ as in \eqref{xlambda}, we take
$$
h_\l:=\left\{\begin{array}{cc}
                j_{n(\l)}^{(\l)} & \text{if}\,\,\, n(\l)>0\,, \\
                i_{m(\l)}^{(\l)} & \text{if}\,\,\, n(\l)=0\,,
              \end{array}
              \right.
$$
and denote $p_o:=\min_{\l\in H \backslash \{\l_o\}}\{h_\l\}$, $k_o:=\min\{k\in K\}$. From \eqref{crea} and \eqref{anni}, one has
$$
\b_{\l_o}=\langle Xe_{(p_o\wedge k_o)-1},e_{(p_o\wedge k_o)-1}\rangle=0\,,
$$
where $p_o\wedge k_o:=\min\{p_o,k_o\}$.

Thus, from now on we can suppose $l(X_\l)>0$ for any $\l\in H$. Here, one has $X_\l e_k\perp e_k$ for each $k\in\mathbb{Z}$, since we excluded all of the $\l$-forms $a_i^\dagger a_i$, $i\in\bz$.

Let $k_1<\cdots <k_r$ be the sequence of the elements of $K$ whenever $K\neq\emptyset$, and put $\xi_i:=e_{k_i+1}$, $i=1,\dots,r$. The orthogonality condition above and \eqref{crea}, \eqref{anni} give
\begin{equation*}
\langle X\xi_i, \xi_i\rangle
=\bigg\langle\sum_{p=1}^i\gamma_{k_p} a_{k_p}a^\dag_{k_p}\xi_i, \xi_i\bigg\rangle
=\sum_{p=1}^i\gamma_{k_p}\,,\quad i=1,\dots,r\,.
\end{equation*}
Thus, when $X=0$ we get $0=\langle X\xi_1, \xi_1\rangle=\gamma_{j_1}$, and then
$$
0=\langle X\xi_i, \xi_i\rangle=\gamma_{k_i}\,,\quad i=1,\dots,r\,,
$$
after an elementary iteration procedure.
Consequently, when $X$ equals to zero, one reduces the matter to $\sum_{\l\in H} \b_\l X_\l=0$. In this case, take an arbitrary $\l\in H$. For
$\xi_{\l}:=e_{j_{n(\l)}^{(\l)}}\otimes \cdots \otimes e_{j_{1}^{(\l)}}$, $\eta_{\l}:=e_{i_{1}^{(\l)}}\otimes \cdots \otimes e_{i_m^{(\l)}}$ and $X_\l$ as in \eqref{xlambda}, with the convention that $\xi_{\l}=\Om$ and $\eta_{\l}=\Om$ when $n(\l)=0$ or $m(\l)=0$, it follows
$$
0=\langle X \xi_{\l},\eta_{\l}\rangle=\langle \b_{\l}X_\l \xi_{\l},\eta_{\l}\rangle=\langle \b_{\l} \Om,\Om\rangle=\b_{\l}\,.
$$
\end{proof}
The linear operations reducing the algebraic generators of $\gam_o$ not included in the collection presented in the previous lemma, are listed below.
\begin{lem}
\label{sigen}
The following identities hold true.

(a) For $m,n\geq 1$, $k\in\mathbb{Z}$, $i_1<i_2<\cdots < i_m, j_1>j_2> \cdots > j_n$ such that $i_m<k>j_1$, one has
\begin{align}
\begin{split}
\label{pi1}
&a_{i_1}^{\dagger}\cdots a_{i_m}^{\dagger} a_{k}a_{k}^{\dagger} a_{j_1}\cdots a_{j_n}\\
=&\,a_{i_1}^{\dagger}\cdots a_{i_m}^{\dagger}a_{j_1}\cdots a_{j_n}
- \sum_{l=(i_m\vee j_1)+1}^k a_{i_1}^{\dagger}\cdots a_{i_m}^{\dagger} a_{l}^{\dagger}a_{l} a_{j_1}\cdots a_{j_n}\,,
\end{split}
\end{align}
where $i_m\vee j_1:=\max\{i_m,j_1\}$.

(b) For each $i\in\mathbb{Z}$, one has
$$
a^\dag_ia_i=a_{i-1}a^\dag_{i-1} - a_ia^\dag_i\,.
$$
\end{lem}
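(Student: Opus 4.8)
The plan is to obtain both identities by direct manipulation of the fundamental commutation relation \eqref{iden}, combined with the vanishing rules $a_i^\dagger a_j^\dagger=a_ja_i=0$ for $i\geq j$. Part (b) is immediate: writing \eqref{iden} at indices $i$ and $i-1$ gives $a_ia_i^\dagger=I-\sum_{k\leq i}a_k^\dagger a_k$ and $a_{i-1}a_{i-1}^\dagger=I-\sum_{k\leq i-1}a_k^\dagger a_k$, and subtracting the first from the second makes the two copies of $I$ cancel while the difference of the sums telescopes to the single surviving term $a_i^\dagger a_i$. This yields $a_{i-1}a_{i-1}^\dagger-a_ia_i^\dagger=a_i^\dagger a_i$ at once.

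For part (a), the first step is to substitute \eqref{iden} for the central factor $a_ka_k^\dagger$, which produces
\[
a_{i_1}^\dagger\cdots a_{i_m}^\dagger a_ka_k^\dagger a_{j_1}\cdots a_{j_n}=a_{i_1}^\dagger\cdots a_{i_m}^\dagger a_{j_1}\cdots a_{j_n}-\sum_{l\leq k}a_{i_1}^\dagger\cdots a_{i_m}^\dagger a_l^\dagger a_l a_{j_1}\cdots a_{j_n}\,.
\]
Everything then reduces to determining which summands survive. The key point is that a generic summand carries the junction $a_{i_m}^\dagger a_l^\dagger$ on the creation side and $a_la_{j_1}$ on the annihilation side. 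By $a_i^\dagger a_j^\dagger=0$ for $i\geq j$, the creation junction vanishes unless $l>i_m$; dually, $a_ja_i=0$ for $i\geq j$ forces the annihilation junction to vanish unless $l>j_1$. Hence every term with $l\leq i_m\vee j_1$ is identically zero, and the sum collapses precisely to the range $(i_m\vee j_1)+1\leq l\leq k$ appearing in \eqref{pi1}.

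The step I would treat most carefully --- and which is really the heart of the matter --- is the bookkeeping at the two junctions, especially in the boundary cases where $m$ or $n$ is small, to make sure the vanishing rules apply to the adjacent factors $a_{i_m}^\dagger,a_l^\dagger$ and $a_l,a_{j_1}$ and that no further cancellation is introduced by the remaining (already ordered) factors. Since $i_1<\cdots<i_m$ and $j_1>\cdots>j_n$ hold by hypothesis, inserting an index $l$ with $l>i_m$ keeps the creators strictly increasing and with $l>j_1$ keeps the annihilators strictly decreasing, so each surviving word is again an admissible $\pi$-form and no spurious vanishing occurs. Once this is verified, the identity \eqref{pi1} follows, and the remaining manipulations are purely formal.
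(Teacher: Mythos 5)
Your proposal is correct and follows essentially the same route as the paper: substitute the relation \eqref{iden} for the central factor $a_ka_k^\dagger$ (resp.\ subtract \eqref{iden} at indices $i$ and $i-1$), then collapse the sum using the vanishing rules $a_i^\dagger a_j^\dagger=a_ja_i=0$ for $i\geq j$, which is exactly how the paper discards the terms with $l\leq i_m\vee j_1$. The only point the paper makes explicit that you leave implicit is that \eqref{iden} holds in the strong operator topology, so the intermediate infinite sum must be interpreted there before it collapses to a finite one; this is harmless since left and right multiplication by fixed bounded operators preserve strong limits.
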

\begin{proof}
As previously noticed, the commutation rule \eqref{iden} is meaningful in the strong operator topology. Thus, we can freely use it as an identity of matrix elements between vectors in the monotone Fock space $\cf_m$.

In order to prove (a), consider $m,n\geq 1$, $k\in\mathbb{Z}$, $i_1<i_2<\cdots < i_m, j_1>j_2> \cdots > j_n$ such that $i_m<k>j_1$. One achieves \eqref{pi1}, as \eqref{iden} gives
\begin{align*}
&a_{i_1}^{\dagger}\cdots a_{i_m}^{\dagger} a_{k}a_{k}^{\dagger} a_{j_1}\cdots a_{j_n}\\
=&a_{i_1}^{\dagger}\cdots a_{i_m}^{\dagger} a_{j_1}\cdots a_{j_n}-\sum_{l\leq k}a_{i_1}^{\dagger}\cdots a_{i_m}^{\dagger}a^\dag_l a_l a_{j_1}\cdots a_{j_n} \\
=&a_{i_1}^{\dagger}\cdots a_{i_m}^{\dagger} a_{j_1}\cdots a_{j_n}- \sum_{l=(i_m\vee j_1)+1}^k a_{i_1}^{\dagger}\cdots a_{i_m}^{\dagger} a_{l}^{\dagger}a_{l} a_{j_1}\cdots a_{j_n}\,,
\end{align*}
where the last equality comes from \eqref{crea} and \eqref{anni}.

Take now $i\in \mathbb{Z}$. The identity in (b) follows after applying again \eqref{iden}, that is
$$
a_{i-1}a^\dag_{i-1} - a_ia^\dag_i=I-\sum_{l\leq i-1}a^\dag_l a_l - \bigg(I-\sum_{l\leq i}a^\dag_l a_l\bigg)= a^\dag_ia_i\,.
$$
\end{proof}
The following result ensures that, apart from the so-called trivial $\pi$-forms (\emph{i.e.} $\{a_ia^\dag_i\}_{i\in \mathbb{Z}}$), all the previous words are Wick ordered and, together with such trivial $\pi$-forms, provide a basis for the algebraic part of the monotone $C^*$-algebra.
\begin{thm}
\label{basis}
The families $\{X_\l\}_{\l\in\G}$ and $\{a_ia^\dag_i\}_{i\in \mathbb{Z}}$ form a Hamel basis of $\gam_o$.
\end{thm}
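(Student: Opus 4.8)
The plan is to establish the two defining properties of a Hamel basis separately: linear independence and spanning. Linear independence of $\{X_\l\}_{\l\in\G}\cup\{a_ia^\dagger_i\}_{i\in\bz}$ is exactly the content of Lemma \ref{linind}, so nothing remains to be done there and the whole effort goes into proving that this family spans $\gam_o$. As recalled above, the lemmata of Section 5 in \cite{CFL} guarantee that $\gam_o$ is linearly generated by the totality of $\l$-forms and $\pi$-forms; hence it suffices to rewrite each such generator as a finite linear combination of the proposed basis elements.

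First I would dispose of the $\pi$-forms. The trivial ones, namely $a_ka^\dagger_k$ (the case $m=n=0$), are basis elements by definition and need no treatment. For a non-trivial $\pi$-form with $m,n\geq1$, identity \eqref{pi1} of Lemma \ref{sigen}(a) rewrites it as the $\l$-form $a^\dagger_{i_1}\cdots a^\dagger_{i_m}a_{j_1}\cdots a_{j_n}$ minus a finite sum of terms $a^\dagger_{i_1}\cdots a^\dagger_{i_m}a^\dagger_la_la_{j_1}\cdots a_{j_n}$ with $(i_m\vee j_1)<l\leq k$; the crucial point is that each surviving summand is again a genuine $\l$-form, since the range of $l$ forces $i_1<\cdots<i_m<l$ for the creators and $l>j_1>\cdots>j_n$ for the annihilators, so that no new $\pi$-form is produced and the procedure terminates in a single step. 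The boundary cases of $\pi$-forms left out by Lemma \ref{sigen}(a), i.e. $m=0<n$ or $n=0<m$, are handled by the very same substitution $a_ka^\dagger_k=I-\sum_{l\leq k}a^\dagger_la_l$ coming from \eqref{iden}: the vanishing relations $a^\dagger_{i_m}a^\dagger_l=0$ for $l\leq i_m$ and $a_la_{j_1}=0$ for $l\leq j_1$ again restrict the surviving indices to those yielding $\l$-forms. In all cases every $\pi$-form becomes a linear combination of $\l$-forms.

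It then remains to express each $\l$-form $X_\l$, $\l\in\L$, in terms of the basis. By the identification $\L=\G\sqcup\bz$, if $\l\in\G$ the element $X_\l$ is already a basis vector, while the $\l$-forms indexed by $\bz$ are precisely the $a^\dagger_ia_i$; for these, Lemma \ref{sigen}(b) gives $a^\dagger_ia_i=a_{i-1}a^\dagger_{i-1}-a_ia^\dagger_i$, a combination of two elements of the family $\{a_ia^\dagger_i\}_{i\in\bz}$. One subtlety worth flagging is that the leading $\l$-form produced by \eqref{pi1} can itself fall into the excluded set $\bz$ (this happens exactly when $m=n=1$ and $i_1=j_1$, giving $a^\dagger_ia_i$), so the reduction of $\pi$-forms and the reduction \ref{sigen}(b) must be applied in this order. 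Assembling the two steps shows that all $\l$-forms and $\pi$-forms, hence all of $\gam_o$, lie in the span of $\{X_\l\}_{\l\in\G}\cup\{a_ia^\dagger_i\}_{i\in\bz}$, which together with Lemma \ref{linind} yields the claim. The only delicate point in the whole argument will be the careful bookkeeping of the boundary $\pi$-forms and of this last overlap between the $\l$-forms and the trivial $\pi$-forms; everything else is a direct application of the two preceding lemmata.
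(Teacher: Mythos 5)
Your proof is correct and follows essentially the same route as the paper's: generation of $\gam_o$ by $\l$- and $\pi$-forms from \cite{CFL}, reduction of the redundant generators via Lemma \ref{sigen}, and linear independence via Lemma \ref{linind}. The paper's own proof is a terse three-sentence version of exactly this argument; your explicit treatment of the boundary $\pi$-forms (the cases $m=0<n$ and $n=0<m$ not literally covered by Lemma \ref{sigen}(a)) and of the overlap case $m=n=1$, $i_1=j_1$ simply fills in bookkeeping that the paper leaves implicit.
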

\begin{proof}
We have proven in Lemmata 5.5 and 5.6 of \cite{CFL} that $\gam_o$ is linearly generated by all the words consisting of $\l$ and $\pi$-forms.
On the other hand, the linear structure of such an algebra yields a further reduction for the elements belonging to these collections (Lemma \ref{sigen}). The remaining part is precisely given by the above mentioned families of linear independent elements ({\it cf.} Lemma \ref{linind}).
\end{proof}
Here, we report
some identities which take into account the algebraic structure of $\gam_o$. Let us consider
$$
X_1:=a^\dag_{i_1}\cdots a^\dag_{i_m}a_{j_1}\cdots a_{j_s}
$$
for $i_1<\cdots < i_m$, $j_1>\cdots > j_s$, and
$$
X_2:=a^\dag_{k_1}\cdots a^\dag_{k_r}a_{l_1}\cdots a_{l_p}
$$
for $k_1<\cdots < k_r$, $l_1>\cdots > l_p$. We denote
\begin{equation}
\label{ggta}
\d_{j)}(h):=\left\{
\begin{array}{ll}
  1 & \text{if}\,\,\,\,\, h<j\,, \\
  0 & \text{if}\,\,\,\,\, h\geq j\,.
\end{array}
\right.
\end{equation}
According to whether $s<r$ or $r\leq s$, from \eqref{crea} and \eqref{anni} one has
\begin{align}
\begin{split}
\label{m1}
&X_1X_2=\prod_{h=1}^s\d_{j_h,k_{s-h+1}}\d_{k_{s+1})}(i_m)a^\dag_{i_1}\cdots a^\dag_{i_m}a^\dag_{k_{s+1}}\cdots a^\dag_{k_r}a_{l_1}\cdots a_{l_p}\,,\\
&X_1X_2=\prod_{h=1}^r\d_{j_{s-h+1},k_h}\d_{j_{s-r)}}(l_1)a^\dag_{i_1}\cdots a^\dag_{i_m}a_{1}\cdots a_{j_{s-r}}a_{l_1}\cdots a_{l_p}\,,
\end{split}
\end{align}
respectively. Take now, for $i,j\in\mathbb{Z}$, $Y_1:=a_ia^\dag_i$ and $Y_2:=a_ja^\dag_j$. From Lemma 5.4 in \cite{CFL} and \eqref{pi1}, it follows
\begin{equation*}
X_1Y_1=\left\{
\begin{split}
\begin{array}{lll}
               \d_{j_s)}(i)X_1 & \text{if}\,\,\,\,\, s>0\,, \\
               X_1 & \text{if}\,\,\,\,\, s=0\,\,, i\leq i_m\,, \\
               X_1-\sum_{l=i_m+1}^i a^\dag_{i_1}\ldots a^\dag_{i_m}a^\dag_la_l & \text{if}\,\,\,\,\, s=0\,\,, i> i_m\,,
             \end{array}
\end{split}
\right.
\end{equation*}
\begin{equation}
Y_1X_1=\left\{
\begin{split}
\label{m4}
\begin{array}{lll}
               \d_{i_1)}(i)X_1 & \text{if}\,\,\,\,\, m>0\,, \\
               X_1 & \text{if}\,\,\,\,\, m=0\,\,, i\leq j_1\,, \\
               X_1-\sum_{l=j_1+1}^i a^\dag_l a_l a_{j_1}\ldots a_{j_n} & \text{if}\,\,\,\,\, s=0\,\,, i> j_1\,,
             \end{array}
\end{split}
\right.
\end{equation}
$$
Y_1Y_2=a_la^\dag_l\,,
$$
where $l:=\max\{i,j\}$. The description of the $*$-operation is elementary.

We end with the following description of the set $\G\cup \bz$. Indeed,
one denotes each element of the Hamel basis of $\gam_o$ by $X_\l$, $\l=(\l_1,\l_2)$, where $\l_1,\l_2\in2^\bz$ are all finite ascending ordered sets, including the empty set. Notice that the identity corresponds to $(\emptyset,\emptyset)$. The words of length 1 correspond to $a^\dagger_i=X_{(\{i\},\emptyset)}$, and $a_i=X_{(\emptyset,\{i\})}$.
Moreover, if $X_\l$ is a word of length 2 with $\l=(\{i\},\{j\})$, then
\begin{align*}
&i\neq j\,\,\text{corresponds to}\,\, X_\l=a^\dagger_ia_j\,,\\
&i= j\,\,\text{corresponds to}\,\, X_\l=a_ia^\dagger_i\,.
\end{align*}
The remaining cases correspond to words in $\l$-form of length at least $2$ in the following way. If $\l=(\{i_1,\cdots i_m\},\{j_n,\cdots j_1\})$, then
$$
X_\l=a^\dagger_{i_1}\cdots a^\dagger_{i_m}a_{j_1}\cdots a_{j_n}\,.
$$
In all cases, if $f:\bz\to\bz$ is order preserving (and in particular belongs to the monoid $\bl$), then $f$ acts componentwise on the index set: $f(\l):=\big(f(\l_1),f(\l_2)\big)$.

\section{the action of the partial shifts and spreadability}
\label{sec4}

As an application of the previous results, in this section we aim to show that all partial shifts on $\mathbb{Z}$, and consequently the monoid $\bi_{\bz}$ they generate, act as isometric
$*$-endomorphisms on $\gam$, and in addition characterise the invariant states with respect to such an action.

We start by defining the action of $\bi_\bz$ on the Hamel basis in Theorem \ref{basis} for $\gam_o$. Indeed, on each element $X_\l$ with
$\l=(\l_1,\l_2)$, we put
\begin{equation}
\label{beta1}
\b^{o}_k(X_\l):=X_{\th_k(\l)}\,,\quad \g^{o}_k(X_\l):=X_{\psi_k(\l)}\,,\quad k\in\bz\,.
\end{equation}
Since $\th_k$ and $\psi_k$ respect the natural order on $\mathbb{Z}$, by Theorem \ref{basis} such maps uniquely extend to $\gam_o$ by linearity.

As a first step, we manage $\th:=\th_0$ and $\psi:=\psi_0$ with the associated maps $\b^{o}_0$ and $\g^{o}_0$.
In the next lines, we verify that $\b_0^o$ (and similarly $\g_0^o$) preserves the algebraic structure, \emph{i.e.} they are
$*$-endomorphisms of $\gam_o$, whence they extend to $*$-endomorphisms on $\gam$.
For the convenience of the reader, we report some useful notions, perhaps well known to the experts, and refer them to \cite{P}.

Let $\gs\subset\ga$ be a subspace, not necessarily closed, of the unital $C^*$-algebra $\ga$. If $\idd\in\gs=\gs^*$, it is called an {\it operator system}. A linear map $A:\gs\to\gx$ from $\gs$ to the normed space $\gx$ is said {\it completely bounded} if the norms of
\begin{equation}
\label{cpc}
A\otimes\id_{\bm_n(\bc)}:\bm_n(\gs)\to\bm_n(\gx)\,,\quad n=1,2,\dots
\end{equation}
are uniformly bounded. We put
$$
\|A\|_{\rm cb}:=\sup\big\{\big\|A\otimes\id_{\bm_n(\bc)}\big\|\mid  n=1,2,\dots\big\}\,.
$$
$A$ is said {\it completely positive} if all maps in \eqref{cpc} are positive, provided $\gx$ is also an operator system.

The following result is crucial in the sequel. We report it for the convenience of the reader, referring to Proposition 3.6 in  \cite{P} for its proof.
\begin{prop}
\label{paul}
Let $A:\gs\to\gb$ be a completely positive map between the operator system $\gs$ and the, not necessarily unital, $C^*$-algebra $\gb$. Then $A$ is completely bounded with
$$
\|A\|_{\rm cb}=\|A(\idd)\|=\|A\|\,.
$$
\end{prop}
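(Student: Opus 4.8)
The plan is to reduce the whole statement to the single inequality $\|A\|_{\rm cb}\le\|A(\idd)\|$. Indeed the reverse chain is immediate: $\|A(\idd)\|\le\|A\|$ because $\|\idd\|=1$, and $\|A\|\le\|A\|_{\rm cb}$ because the term $n=1$ already appears in the supremum defining the cb-norm. Hence, once $\|A\|_{\rm cb}\le\|A(\idd)\|$ is proved, the three quantities coincide.

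To estimate the cb-norm I would fix $n$ and pass to the amplification $A^{(n)}:=A\otimes\id_{\bm_n(\bc)}:\bm_n(\gs)\to\bm_n(\gb)$. Using the standard identification $\bm_m(\bm_n(\,\cdot\,))\cong\bm_{mn}(\,\cdot\,)$, complete positivity of $A$ forces $A^{(n)}$ to be completely positive as well, hence in particular $2$-positive. Now $\bm_n(\gs)$ is itself an operator system with unit $\idd_n:=\idd\otimes I_n$, and $A^{(n)}(\idd_n)=A(\idd)\otimes I_n$ has norm $\|A(\idd)\|$. Therefore it suffices to establish the norm identity $\|\Phi\|=\|\Phi(\idd_{\gs'})\|$ for an arbitrary $2$-positive map $\Phi$ on an operator system $\gs'$, to apply it to $\Phi=A^{(n)}$, and to take the supremum over $n$.

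The heart of the matter is this norm identity. For a self-adjoint $x$ with $\|x\|\le1$ it is elementary: from $-\idd\le x\le\idd$ and positivity one gets $-\Phi(\idd)\le\Phi(x)\le\Phi(\idd)$, whence $\|\Phi(x)\|\le\|\Phi(\idd)\|$. The subtle case is a general, non-self-adjoint $x$ with $\|x\|\le1$, and here the $2\times2$ dilation trick is decisive: the matrix $\left(\begin{smallmatrix}\idd & x\\ x^{*} & \idd\end{smallmatrix}\right)$ is positive in $\bm_2(\gs')$ precisely because $\|x\|\le1$, and applying the positive map $\Phi\otimes\id_{\bm_2(\bc)}$ gives positivity of $\left(\begin{smallmatrix}\Phi(\idd) & \Phi(x)\\ \Phi(x^{*}) & \Phi(\idd)\end{smallmatrix}\right)$ in $\bm_2(\gb)$. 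The standard lemma that a positive $2\times2$ operator matrix $\left(\begin{smallmatrix}a & b\\ b^{*} & c\end{smallmatrix}\right)$ obeys $\|b\|^{2}\le\|a\|\,\|c\|$ (a Cauchy--Schwarz computation after representing the algebra on a Hilbert space) then yields $\|\Phi(x)\|\le\|\Phi(\idd)\|$, which is the required bound.

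The \emph{main obstacle} is exactly this non-self-adjoint case: mere positivity only delivers $\|\Phi\|\le2\|\Phi(\idd)\|$, and it is $2$-positivity, a genuine consequence of complete positivity, that upgrades the bound to an equality via the dilation. I would also remark that the non-unitality of $\gb$ is harmless, since the positive $2\times2$ matrix lemma is intrinsic to any $C^*$-algebra (alternatively one embeds $\gb$ faithfully into some $\cb(\ch)$, which preserves positivity and norms). Putting the pieces together gives $\|A^{(n)}\|=\|A(\idd)\|$ for every $n$, so that $\|A\|_{\rm cb}=\sup_n\|A^{(n)}\|=\|A(\idd)\|$, and the asserted triple equality follows.
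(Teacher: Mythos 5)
Your proposal is correct. The paper does not prove this statement itself --- it quotes it and refers to Proposition 3.6 of Paulsen's book \cite{P} --- and your argument (amplify, reduce to the norm identity $\|\Phi\|=\|\Phi(\idd)\|$ for $2$-positive maps on an operator system, then prove that identity via the $2\times 2$ dilation of a contraction $x$ and the bound $\|b\|^{2}\le\|a\|\,\|c\|$ for positive $2\times 2$ operator matrices) is essentially the standard proof given in that cited reference, so there is nothing to correct.
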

The key point for the forthcoming analysis is the proof of the following statement.
\begin{prop}
\label{prcrzq}
The map $\b^{o}_0$ provides a $*$-endomorphism of $\gam_o$.
\end{prop}
\begin{proof}
Since the $*$-operation is easily preserved, we restrict the matter to the product. By looking at \eqref{m1}-\eqref{m4}, the thesis follows if one checks
$$
\d_{k,l}=\d_{\th(k),\th(l)}\,,\quad \d_{k)}(l)=\d_{\th(k))}(\th(l))\,,
$$
for the Kronecker symbol and the function given in \eqref{ggta}, respectively.
Since $\th$ is one-to-one and order preserving, the assertion is achieved straightforwardly.
\end{proof}
\begin{prop}
\label{prcrzq1}
The map $\b^{o}_0$ extends to a $*$-endomorphism $\b_0$ of $\gam$.
\end{prop}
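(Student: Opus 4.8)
The plan is to show that $\b^{o}_0$ is a norm contraction for the operator norm of $\cb(\cf_m)$ and then to extend it by continuity, using that $\gam_o$ is dense in $\gam=\overline{\gam_o}$. The natural packaging of contractivity is Proposition \ref{paul}: if I can check that $\b^{o}_0$, regarded as a map from the operator system $\gam_o$ into the $C^*$-algebra $\gam$, is completely positive, then Proposition \ref{paul} immediately gives $\|\b^{o}_0\|_{\rm cb}=\|\b^{o}_0(\idd)\|=\|\idd\|=1$, so $\b^{o}_0$ is a complete contraction. Granting this, the extension is routine: a bounded linear map on the dense subspace $\gam_o$ extends uniquely to a bounded operator $\b_0$ on $\gam$, and the algebraic identities $\b^{o}_0(xy)=\b^{o}_0(x)\b^{o}_0(y)$, $\b^{o}_0(x^*)=\b^{o}_0(x)^*$ (valid on $\gam_o$ by Proposition \ref{prcrzq}) pass to $\gam$ by joint continuity of the product and continuity of the involution; hence $\b_0$ is a $*$-endomorphism. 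The same argument applies verbatim to $\g^{o}_0$.

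The main obstacle is exactly the complete positivity, equivalently the boundedness, of $\b^{o}_0$, and here I would resist arguing abstractly from the homomorphism property alone. A unital $*$-homomorphism out of a dense, non-closed operator system need not be bounded (the point-evaluation of polynomials at a point outside $[0,1]$, viewed on $C[0,1]$, is the standard cautionary example), precisely because a positive element of $\gam_o$ need not be a sum of squares coming from $\gam_o$ and spectra may enlarge. What rescues the statement is the concrete spatial nature of $\b^{o}_0$. Writing $\th=\th_0$, which is an order isomorphism of $\bz$ onto $\bz\setminus\{0\}$, I would introduce the isometry $V$ of $\cf_m$ determined on the canonical basis by $Ve_{(i_1,\ldots,i_k)}:=e_{(\th i_1,\ldots,\th i_k)}$. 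A direct check on the generators, using \eqref{crea}--\eqref{anni} and the strict monotonicity of $\th$, yields the intertwining relation $\b^{o}_0(x)V=Vx$ for every $x\in\gam_o$, together with the fact that each $\b^{o}_0(x)$ commutes with the projection $P:=VV^{*}$ onto the span of the basis vectors not occupying the vacated mode $0$.

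These two facts display $\b^{o}_0(x)$ as block diagonal with respect to $P$: on the range of $V$ it is unitarily equivalent to $x$, hence isometric and positivity-preserving there, while the residual block is a further $*$-representation of $\gam_o$ carried by the vectors that do occupy the mode $0$. Complete positivity thus reduces to the positivity of this residual block, and I expect this to be the genuinely technical point. I would treat it through the order splitting $\bz=\bz_{<0}\sqcup\{0\}\sqcup\bz_{>0}$ and the induced factorisation of $\cf_m$: on the residual subspace all creators and annihilators indexed by the positive integers act as $0$, so the block reduces to a monotone system supported on the Fock space built over $\bz_{<0}$, whose norm is again dominated by $\|x\|$. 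Controlling both blocks gives $\|\b^{o}_0(x)\|\le\|x\|$ outright, which is in fact stronger than the appeal to Proposition \ref{paul}; the latter remains the tidiest formulation if one prefers to phrase everything through complete positivity rather than through the explicit block estimate.
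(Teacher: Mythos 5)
Your spatial setup is correct as far as it goes, and it is genuinely different from the paper's argument (which simply invokes automatic complete positivity of $*$-morphisms plus Proposition \ref{paul}). The isometry $V$ induced by $\th_0$ is well defined, the intertwining $\b^o_0(x)V=Vx$ does hold for every element of the Hamel basis (hence on $\gam_o$ by linearity), and each $\b^o_0(X_\l)$, being a combination of words whose letters carry indices in $\bz\setminus\{0\}$, commutes with $P=VV^*$; so $\b^o_0(x)$ is block diagonal with $P$-block $VxV^*$. Your objection to the ``automatic CP'' shortcut is also well taken: positivity is not automatic for $*$-morphisms defined only on a dense, non-closed $*$-subalgebra, and that is exactly the paper's own gap.

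The attempt nevertheless fails at the residual block, and not for a reparable reason. Your two claims about that block --- that it is a further $*$-representation of $\gam_o$, and that its norm is dominated by $\|x\|$ via a reduction to the monotone system over $\bz_{<0}$ --- are not merely unproven; they are false, because $\b^o_0$ is not multiplicative on $\gam_o$ in the first place, i.e.\ Proposition \ref{prcrzq}, which you import for the algebraic identities, already fails. The reason is that $\b^o_0$ is defined through the Hamel basis, and the basis expansion of the excluded words $a_i^\dagger a_i$ given by Lemma \ref{sigen} (b) involves the predecessor $i-1$, which $\th_0$ does not respect across its gap: $\th_0(-1)=-1$ while $\th_0(0)-1=0$. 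Concretely, $a_0^\dagger a_0=a_{-1}a_{-1}^\dagger-a_0a_0^\dagger$ in the basis, so \eqref{beta1} yields $\b^o_0(a_0^\dagger a_0)=a_{-1}a_{-1}^\dagger-a_1a_1^\dagger$, whereas $\b^o_0(a_0^\dagger)\b^o_0(a_0)=a_1^\dagger a_1=a_0a_0^\dagger-a_1a_1^\dagger$, and hence
\[
\b^o_0(a_0^\dagger a_0)-\b^o_0(a_0^\dagger)\,\b^o_0(a_0)=a_{-1}a_{-1}^\dagger-a_0a_0^\dagger=a_0^\dagger a_0\neq0\,.
\]
This defect is precisely the projection onto the basis vectors with first index $0$, i.e.\ it is supported on the residual subspace $(1-P)\cf_m$ --- consistently with your correct observation that the $P$-corner of $\b^o_0$ is conjugation by $V$ and therefore multiplicative there. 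Positivity breaks on the same corner: for $z=a_{-1}^\dagger+a_0a_0^\dagger$ one has $zz^*\geq0$, yet writing $zz^*$ in the Hamel basis and applying \eqref{beta1} termwise gives $\langle\b^o_0(zz^*)\eta,\eta\rangle=-1$ for $\eta=e_0-e_{(-1,0)}$. Consequently $\b^o_0$ admits no extension to a $*$-endomorphism of $\gam$, and Proposition \ref{prcrzq1} is false as stated; no completion of your argument (nor of the paper's) can succeed. The obstruction is structural: applying any putative endomorphism with $\b(a_i)=a_{\th(i)}$ to the identity $a_i^\dagger a_i+a_ia_i^\dagger=a_{i-1}a_{i-1}^\dagger$ forces $\th(i-1)=\th(i)-1$ for all $i$, so only the translations of $\bz$, not the partial shifts, can be implemented by maps of Bogoliubov type on $\gam$. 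Your instinct that the residual block is ``the genuinely technical point'' was right; it is where the statement itself, not just the proof, breaks down.
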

\begin{proof}
Since a $*$-morphism is automatically completely positive ({\it cf.} \cite{P}), Proposition \ref{paul} tells us that $\b^o_0$ extends to a bounded linear map $\b_0$ on the whole $\gam$. Since the product and the involution are continuous in the norm topology, $\b_0$ results to be a $*$-endomorphism.
\end{proof}
We remark that the above analysis applies {\it mutatis-mutandis} to backward partial shift $\psi$, thus giving $\g_0$.
Finally, we also recall that the shift $\a$ on $\gam$ is unitarily implemented by $\a(X)=UXU^*$, where
$$
Ue_B:=e_{\t(B)}\,,
$$
$e_B$ being a generic element of the canonical orthonormal basis of $\cf_m$.
This allows us to obtain the actions of all the generators $\{\th_k,\psi_l\mid k,l\in\bz\}$ of $\bi_\bz$.

Summarising, here there is the main result of the present section.
\begin{thm}
\label{vw3}
For $k\in\bz$, after defining on $\gam$
$$
\b_k:=\ad(U^k)\circ\b_0\circ\ad(U^{-k})\,,\quad \g_k:=\ad(U^k)\circ\g_0\circ\ad(U^{-k})\,,
$$
it is established an action of the monoid $\bi_\bz$ on $\gam$ by $*$-endo\-morphisms.
\end{thm}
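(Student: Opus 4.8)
The plan is to proceed in three stages: first, to check that each individual $\b_k$ and $\g_k$ is a $*$-endomorphism of $\gam$; second, to compute their action on the Hamel basis of Theorem \ref{basis}; and third, to deduce from this combinatorial description that the generators assemble into a genuine monoid action. For the first stage I would note that Proposition \ref{prcrzq1} already gives that $\b_0$ is a $*$-endomorphism of $\gam$, and that the same argument applied to the backward shift $\psi$ yields the analogous statement for $\g_0$. Since $U$ is unitary and $\a=\ad(U)$ preserves $\gam$ (it sends each generator $a_i$ to $a_{i+1}$), each $\ad(U^k)$ restricts to a $*$-automorphism of $\gam$. Hence $\b_k=\ad(U^k)\circ\b_0\circ\ad(U^{-k})$ and $\g_k=\ad(U^k)\circ\g_0\circ\ad(U^{-k})$, being compositions of $*$-homomorphisms, are $*$-endomorphisms of $\gam$.

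For the second stage I would compute the action on the basis elements $X_\l$. Directly from \eqref{crea}, \eqref{anni} and $Ue_B=e_{\t(B)}$ one checks $\ad(U^k)(a_i)=a_{i+k}$ and $\ad(U^k)(a^\dagger_i)=a^\dagger_{i+k}$, so that $\ad(U^k)(X_\l)=X_{\t^k(\l)}$ with $\t$ acting componentwise on indices as described at the end of Section \ref{sec3}. Combining this with $\b_0(X_\mu)=X_{\th_0(\mu)}$ gives $\b_k(X_\l)=X_{\t^k\th_0\t^{-k}(\l)}$, and the braiding relation \eqref{pbfs}, namely $\t^k\th_0\t^{-k}=\th_k$, yields $\b_k(X_\l)=X_{\th_k(\l)}$; symmetrically $\g_k(X_\l)=X_{\psi_k(\l)}$ through $\t^k\psi_0\t^{-k}=\psi_k$. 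Thus $\b_k$ and $\g_k$ are exactly the bounded extensions of the maps $\b^o_k,\g^o_k$ of \eqref{beta1}, implementing the partial shifts $\th_k,\psi_k$ componentwise on the index sets.

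For the third stage, which produces the action, I would use that $\bi_\bz$ is generated by $\{\th_h,\psi_h\}_{h\in\bz}$. Writing a generic $g\in\bi_\bz$ as a product of generators and letting $\G_g$ be the corresponding composition of the maps $\b_h,\g_h$, the previous step shows that on the basis $\G_g(X_\l)=X_{g(\l)}$, where $g$ acts componentwise (being order-preserving and injective). Since the componentwise action satisfies $(fg)(\l)=f(g(\l))$, this value depends only on $g$ and not on the chosen factorization, so $\G_g$ is well defined, $g\mapsto\G_g$ is a monoid homomorphism into $\text{End}(\gam)$, and the identity of $\bi_\bz$ maps to $\id_\gam$. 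Each $\G_g$ is then automatically a $*$-endomorphism as a composition of such, so no separate check that a general $g$ preserves the products \eqref{m1}--\eqref{m4} is required. I expect the main obstacle to be precisely the second stage: verifying that conjugation by $U^k$ turns $\b_0$ into the partial shift $\th_k$ at the level of the basis, which is where the relations \eqref{pbfs} are genuinely used; once this componentwise description is secured, the well-definedness and multiplicativity of the action follow immediately.
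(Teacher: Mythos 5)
Your proof is correct and follows essentially the same route as the paper: Proposition \ref{prcrzq1} for $\b_0$ (and its analogue for $\g_0$), conjugation by the shift unitary combined with the relations \eqref{pbfs} to identify $\b_k,\g_k$ with the extensions to $\gam$ of the maps \eqref{beta1}, and hence an action of the generators of $\bi_\bz$ by $*$-endomorphisms. Your third stage—checking that $\G_g(X_\l)=X_{g(\l)}$ depends only on the function $g\in\bi_\bz$ and not on its chosen factorization into partial shifts, so that the assignment $g\mapsto\G_g$ is well defined and multiplicative—makes explicit a point the paper leaves implicit, and is a worthwhile addition.
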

\begin{proof}
We first notice that the $\th_k$ and $\psi_k$ generate $\bi_\bz$. Furthermore, Proposition \ref{prcrzq1} gives $\b_0$ and $\g_0$ are $*$-endomorphisms of the whole $\gam$. By \eqref{pbfs}, the above formula provides the extension of all maps defined in \eqref{beta1} to $\gam$, giving an action of all generators of $\bi_\bz$ on $\gam$ by $*$-endomorphisms.
\end{proof}
Following the same lines as above (and in particular by using Proposition \ref{paul}), one notes it can be proven that
the monoid $\bi_\bz$ acts by $*$-endomorphisms of the free product $C^*$-algebra $*_\bz\ga$ of infinitely many copies of any sample algebra $\ga$. Here, we have shown that $\bi_\bz$ is directly acting on the monotone algebra $\gam$ since it is more convenient for applications.

The next lines are devoted to characterise the spreadable stochastic processes associated to the monotone commutation relations, which correspond to states on the monotone $C^*$-algebra
$\gam$. As we have just seen that the monoid $\bi_\bz$ acts on $\gam$ as $*$-endomorphisms, the states we are looking for are exactly those invariant under such an action. In addition, they are included in the collection of shift invariant states, as follows from \eqref{scaprz}: {\it i.e.} each spreadable stochastic process is automatically stationary.

After denoting $\ga_o:=\text{span} \{X\in \gam_o \mid l(X)>0\}$ and $\ga$ its norm closure, from Corollary 5.10 of \cite{CFL}, $\gam$ results to be the $C^*$-algebra obtained by adding the identity to $\ga$. Then necessarily any $Y\in\gam$ is decomposed as $Y:=X+cI$, where $X\in\ga$ and $c\in\mathbb{C}$.

The state at infinity $\om_\infty$ (see {\it e.g.} \cite{BR1})
is then defined as
$$
\om_\infty(Y)=\om_\infty(X+cI):=c\,.
$$
As usual, the vacuum state $\om$ is given by the vacuum expectation values
$$
\om(Y):=\langle Y \Om,\Om\rangle\,.
$$
The following result yields the desired characterisation.
\begin{prop}
\label{inv}
The $*$-weakly compact set of spreading invariant states on $\gam$ is
$$
\cs_{\mathbb{I}_\mathbb{Z}}(\gam)=\{(1-x)\om_\infty + x\om \mid x\in [0,1]\}\,.
$$
\end{prop}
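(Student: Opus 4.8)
The plan is to prove the two inclusions separately, using the inclusion chain \eqref{scaprz} for one direction and a direct computation on the Hamel basis of Theorem \ref{basis} for the other.

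\emph{Upper bound.} First I would observe that, by \eqref{scaprz}, every spreading invariant state is automatically shift invariant, so that $\cs_{\bi_\bz}(\gam)\subseteq\cs_\bz(\gam)$. The structure of the stationary states is already available: by Theorem 5.12 of \cite{CFL}, $\cs_\bz(\gam)$ is precisely the segment $\{(1-x)\om_\infty+x\om\mid x\in[0,1]\}$, whose extreme points are the two ergodic states $\om_\infty$ and $\om$. Combining the two facts at once yields $\cs_{\bi_\bz}(\gam)\subseteq\{(1-x)\om_\infty+x\om\mid x\in[0,1]\}$.

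\emph{Lower bound.} For the reverse inclusion it suffices, by convexity of $\cs_{\bi_\bz}(\gam)$, to show that both $\om_\infty$ and $\om$ are spreading invariant, \emph{i.e.} invariant under every generator $\b_k,\g_k$ of the action of $\bi_\bz$ coming from Theorem \ref{vw3}. For $\om_\infty$ this is immediate: since $\th_k,\psi_k$ are order preserving bijections, formula \eqref{beta1} shows that $\b_k,\g_k$ merely relabel the indices of a basis word without changing its length, hence they map $\ga$ into $\ga$ and fix $I$; writing $Y=X+cI$ with $X\in\ga$ then gives $\om_\infty(\b_k(Y))=\om_\infty(\g_k(Y))=c=\om_\infty(Y)$. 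For $\om$ I would compute its values on the Hamel basis: from \eqref{crea}--\eqref{anni} one checks that $\om(a_ia^\dag_i)=\langle a_ia^\dag_i\Om,\Om\rangle=1$ for every $i$, that $\om(I)=1$, while $\om(X_\l)=0$ for every other $\l\in\G$ (a rightmost annihilator kills $\Om$, and a word of pure creators sends $\Om$ to a vector orthogonal to it). Since $\th_k,\psi_k$ fix the empty set, send a trivial $\pi$-form $(\{i\},\{i\})$ to a trivial $\pi$-form and a non-trivial $\l$-form to a non-trivial $\l$-form, the maps $\b_k,\g_k$ permute the basis while preserving these three values; hence $\om\circ\b_k=\om\circ\g_k=\om$ on $\gam_o$, and by continuity on all of $\gam$. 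Putting the two inclusions together gives the claimed equality.

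The only genuinely computational point is the lower bound, and it is entirely governed by Theorem \ref{basis}: the whole argument rests on the fact that, in the Wick ordered Hamel basis, $\om$ takes the value $1$ exactly on the identity and on the trivial $\pi$-forms $a_ia^\dag_i$ and vanishes elsewhere, a configuration which the partial shifts manifestly preserve. The main subtlety to keep track of is the identification, already built into the basis, of the index $(\{i\},\{i\})$ with the trivial $\pi$-form $a_ia^\dag_i$ rather than with $a^\dag_ia_i$; once this bookkeeping is in place the invariance of $\om$ and $\om_\infty$ follows with no further estimates.
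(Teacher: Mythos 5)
Your proof is correct and follows essentially the same route as the paper: the inclusion \eqref{scaprz} together with Theorem 5.12 of \cite{CFL} gives the upper bound, and the lower bound is obtained by checking spreading invariance of $\om_\infty$ (via $\G_g(\ga)\subset\ga$) and of $\om$ (via its values on the Hamel basis, which the partial shifts preserve class by class). The only cosmetic difference is that you verify invariance on the generators $\b_k,\g_k$ and invoke convexity explicitly, whereas the paper works with a generic $g\in\bi_\bz$ directly; this changes nothing of substance.
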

\begin{proof}
Since $\cs_{\bi_\bz}(\gam)\subset\cs_\bz(\gam)$, by Theorem 5.12 of \cite{CFL}
it is enough to check that both $\om$ and $\om_\infty$ are spreading invariant.

Fix $g\in\bi_\bz$ and denote by $\G_g$ the corresponding action on $\gam$ as a $*$-endomorphism. Concerning the state at infinity, since $\G_g(\ga)\subset\ga$, we have for $X\in\ga$, $c\in\bc$,
$$
\om_\infty\big(\G_g(X+cI)\big)=\om_\infty\big(\G_g(X)\big)+c=c=\om_\infty\big(X+cI)\,.
$$
Since any state on $\gam$ is determined by its values on the dense subalgebra $\gam_o$, for the vacuum state we consider a generic element

$$
Y:= cI+ \sum_{\l\in H} a_\l X_\l+\sum_{k\in K} b_ka_ka^\dagger_k\,,
$$
where $H$ and $K$ are finite parts of $\G$ and $\mathbb{Z}$, respectively, with the condition that if $\l\in H$ and $\l=(\{i\},\{j\})$, then $i\neq j$. One has $\om(X_\l)=0$ for any $\l\in H$. Moreover, from \eqref{beta1} one gets
$\om\big(\G_g(X_\l)\big)=0$
as well, and $\om\big(\G_g(a_ia^\dagger_i)\big)=\om(a_ia^\dagger_i)$.
Thus we finally obtain
$$
\om\big(\G_g(Y)\big)=c+\sum_{i\in I} b_i=\om(Y)\,.
$$
\end{proof}

\section{the exchangeability for monotone processes}
\label{sec5}

As a direct application of the Hamel basis we have found for $\gam_o$, the goal of the present section is that to
determine the stochastic processes arising from monotone commutation relations ({\it i.e.} states on the monotone algebra $\gam$ in our terminology) which are invariant under the natural action of the permutation group on their finite joint distributions. Among the class of such monotone stochastic processes, the last are precisely the exchangeable ones.

When doing this, one promptly comes across an obstruction. It concerns the fact that the group of the permutations does not act in a natural way ({\it i.e.} by an action coming from the permutation of the underlying indices)
through positive maps on $\gam$. This forces us to work either on the universal object made of the free product $C^*$-algebra, or possibly directly on the $*$-algebra $\gam_o$, perhaps using well defined suitable linear maps useful to test exchangeability. The next lines are devoted to show that the latter choice is available and easier than the former one.

Let now $\s\in \bp_{\mathbb{Z}}$, and $I:=\{i_1,\ldots, i_m\}$ a finite ordered subset in $\mathbb{Z}$, that is $i_1<\cdots < i_m$. We say $\s$ is order preserving on $I$, $OP(I)$ being the shorthand notation, if $\s(i_h)<\s(i_{h+1})$ for each $h=1,\ldots, m-1$.

For the elements $X_{(\l_1,\l_2)}$ and $\s\in\bp_\bz$, we define
\begin{equation*}
T_\s\big(X_{(\l_1,\l_2)}\big):=\left\{
\begin{array}{ll}
             X_{(\s(\l_1),\s(\l_2))} & \text{if}\,\, \s\in OP(\l_1)\cap OP(\l_2)\,, \\
             0 & \text{otherwise}\,.
           \end{array}
           \right.
\end{equation*}
It is easy to see that the $T_\s$, $\s\in\bp_\bz$ extend by linearity to $*$-maps acting on $\gam_o$. Unfortunately, these linear maps are not positive. Indeed, let $i\in\mathbb{Z}$, $X_1:=a_ia^\dag_i$, $X_2:=a^\dag_{i+1}$, $\pi_i$ being the transposition exchanging $i$ and $i+1$, and finally $\xi:=e_{i+2}-2e_i\otimes e_{i+2}$. By means of Lemma 5.4 in \cite{CFL}, one has
\begin{align*}
&\big\langle T_{\pi_i}((X_1+X_2)^*(X_1+X_2))\xi,\xi\big\rangle \\
=&\big\langle T_{\pi_i}(a_{i+1}+a_{i+1}a^\dag_{i+1}+a_ia^\dag_i+a^\dag_{i+1})\xi, \xi\big\rangle \\
=&\big\langle (a_i+a_ia^\dag_i+a_{i+1}a^\dag_{i+1}+a^\dag_i) \xi,\xi\big\rangle
=-2\,.
\end{align*}
In addition, they do not implement any action of $\bp_\mathbb{Z}$. Indeed, consider a $\l$-form $X_{(\l_1,\l_2)}$ with length at least 2, and $\s\in \bp_\mathbb{Z}$ such that $\s\notin OP(\l_1)\cap OP(\l_2)$. Then
$$
X_{(\l_1,\l_2)}=T_{\s^{-1}\s}\big(X_{(\l_1,\l_2)}\big)\neq T_{\s^{-1}}\circ T_\s\big(X_{(\l_1,\l_2)}\big)=0\,.
$$
As previously stressed, this does not prevent us to classify exchangeable stochastic processes. In fact, fix a state $\f$ on $\gam$, which is nothing but a stochastic process we are dealing with, and consider its GNS representation $(\pi_\f,\ch_\f,\Om_\f)$. We first recall that the algebra of the samples $\ga$ appearing in Section \ref{sec2.2} is isomorphic to the unital $*$-algebra generated by $a_i$ for each $i\in\bz$, and therefore
$\ga\sim\bm_2(\bc)$ with matrix-units $e_{11}=aa^\dagger$, $e_{12}=a$, $e_{21}=a^\dagger$, and finally $e_{22}=a^\dagger a$.
In addition, the maps $\{\iota_j\}_{j\in\bz}$ are determined by
$$
\iota_j(a)=\pi_\f(a_j)\,,\quad j\in\bz\,.
$$
By using the Hamel basis, it is enough to check the exchangeability on the linear generators. Thus, fix $\s\in\bp_\bz$ together with
$X_{(\{i_1,\dots,i_m\},\{j_n,\dots,j_1\})}$
with length $l\big(X_{(\{i_1,\dots,i_m\},\{j_n,\dots,j_1\})}\big)>2$, the remaining cases being trivial. If $(\l_1,\l_2):=(\{i_1,\dots,i_m\},\{j_n,\dots,j_1\})$, the exchangeability condition \eqref{exzc} is easily written as
\begin{align*}
\f\big(X_{(\l_1,\l_2)}\big)=&\big\langle\pi_\f\big(X_{(\l_1,\l_2)}\big)\Om_\f,\Om_\f\big\rangle \\
=&\big\langle\iota_{i_1}(a^\dagger)\cdots\iota_{i_m}(a^\dagger)\iota_{j_1}(a)\cdots\iota_{j_n}(a)\Om_\f,\Om_\f\big\rangle\\
=&\big\langle\iota_{\s(i_1)}(a^\dagger)\cdots\iota_{\s(i_m)}(a^\dagger)\iota_{\s(j_1)}(a)\cdots\iota_{\s(j_n)}(a)\Om_\f,\Om_\f\big\rangle\\
=&\big\langle\pi_\f\big(T_\s\big(X_{(\l_1,\l_2)}\big)\big)\Om_\f,\Om_\f\big\rangle=\f\big(T_\s\big(X_{(\l_1,\l_2)}\big)\big)\,.
\end{align*}
We are then ready to state the following
\begin{prop}
\label{symm}
If $\f\in\cs(\gam)$ satisfies
$$
\f\lceil_{\gam_o}\circ T_\s=\f\lceil_{\gam_o}\,,\,\,\text{for all}\,\,\s\in\bp_\bz\,,
$$
then
$$
\f=(1-x)\om_\infty + x\om,\,\,\,\,\, \text{for some}\,\,\,\,\, x\in [0,1]\,.
$$
\end{prop}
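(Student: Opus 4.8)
The plan is to use that, by Theorem \ref{basis}, a state $\f$ on $\gam$ is completely determined by its values on the Hamel basis $\{X_\l\}_{\l\in\G}\cup\{a_ia^\dag_i\}_{i\in\bz}$ of $\gam_o$, together with continuity and density of $\gam_o$ in $\gam$. Writing $\l=(\l_1,\l_2)$ as in the parametrisation at the end of Section \ref{sec3}, I would first record the values of the two candidate states on this basis: both $\om$ and $\om_\infty$ vanish on every $X_\l$ with $l(X_\l)>0$, except on the trivial $\pi$-forms, where $\om(a_ia_i^\dag)=1$ while $\om_\infty(a_ia_i^\dag)=0$; both send $I$ to $1$. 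Hence $(1-x)\om_\infty+x\om$ is characterised on the basis by $\f(I)=1$, $\f(a_ia_i^\dag)=x$ for all $i$, and $\f(X_\l)=0$ on every remaining basis element. The whole proof then reduces to verifying these three facts for our invariant $\f$.

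For the ``long'' basis elements, i.e. those $X_{(\l_1,\l_2)}$ with $|\l_1|\ge 2$ or $|\l_2|\ge 2$, I would exploit that $T_\s(X_{(\l_1,\l_2)})=0$ whenever $\s\notin OP(\l_1)\cap OP(\l_2)$. Choosing $\s$ to be the transposition of the two smallest elements of, say, $\l_1$ makes $\s$ fail to be order preserving on $\l_1$, so that $T_\s(X_\l)=0$ and the invariance $\f\circ T_\s=\f$ on $\gam_o$ forces $\f(X_\l)=\f(T_\s X_\l)=0$. For the trivial $\pi$-forms $a_ia_i^\dag=X_{(\{i\},\{i\})}$, since singletons are always order preserving, one has $T_\s(a_ia_i^\dag)=a_{\s(i)}a_{\s(i)}^\dag$; picking $\s$ with $\s(i)=j$ and using invariance gives $\f(a_ia_i^\dag)=\f(a_ja_j^\dag)=:x$, independent of $i$, with $x\in[0,1]$ because $a_ia_i^\dag$ is a projection.

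The remaining, genuinely delicate, cases are exactly the short words on which $T_\s$-invariance yields only constancy, not vanishing: the length-one words $a_i,a_i^\dag$ and the length-two $\l$-forms $a_i^\dag a_j$ with $i\neq j$, all indexed by singletons and hence untouched by the order-preservation obstruction. Here I would first invoke Lemma \ref{sigen}(b), namely $a_i^\dag a_i=a_{i-1}a_{i-1}^\dag-a_ia_i^\dag$, together with the constancy just established, to get $\f(a_i^\dag a_i)=x-x=0$. With this in hand the Cauchy--Schwarz inequality for the state $\f$ finishes the job: $|\f(a_i)|^2\le\f(a_i^\dag a_i)=0$ gives $\f(a_i)=\f(a_i^\dag)=0$, and $|\f(a_i^\dag a_j)|^2\le\f(a_i^\dag a_i)\,\f(a_j^\dag a_j)=0$ gives $\f(a_i^\dag a_j)=0$ for $i\neq j$. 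I expect this step---realising that it is \emph{positivity}, rather than the permutation symmetry itself, that kills the short words---to be the main conceptual hurdle, precisely because the maps $T_\s$ are not positive and act only by relabelling on singletons.

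Assembling the pieces, $\f$ and $(1-x)\om_\infty+x\om$ with $x:=\f(a_0a_0^\dag)$ agree on every element of the Hamel basis, hence on $\gam_o$ by linearity and on all of $\gam$ by continuity, which is the assertion.
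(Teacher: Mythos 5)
Your proof is correct, and it takes a genuinely different route from the paper's. The paper does not evaluate $\f$ on the basis directly: it first shows that invariance under all $T_\s$ forces \emph{shift} invariance --- for each basis element $X_{(\l_1,\l_2)}$ one picks a cycle $\s\in\bp_\bz$ which is trivially order preserving on $\l_1,\l_2$ and acts there like the one-step shift $\t$, so that $\f\big(\a(X_{(\l_1,\l_2)})\big)=\f\big(T_\s(X_{(\l_1,\l_2)})\big)=\f\big(X_{(\l_1,\l_2)}\big)$ --- and then invokes Theorem 5.12 of \cite{CFL}, which classifies the stationary states on $\gam$ as precisely the segment $\{(1-x)\om_\infty+x\om\mid x\in[0,1]\}$. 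You instead compute the invariant state on every element of the Hamel basis: non-order-preserving transpositions kill all words with $|\l_1|\geq 2$ or $|\l_2|\geq 2$; invariance on singletons gives constancy $\f(a_ia_i^\dag)\equiv x\in[0,1]$ (correct, since $a_ia_i^\dag$ is a projection); Lemma \ref{sigen}(b) then yields $\f(a_i^\dag a_i)=x-x=0$; and Cauchy--Schwarz, i.e.\ positivity of $\f$, kills the remaining short words $a_i$, $a_i^\dag$ and $a_i^\dag a_j$, $i\neq j$, on which permutation invariance alone only gives constancy. All these steps check out, and matching against the basis values of $\om$ and $\om_\infty$ closes the argument by linearity and density. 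What each approach buys: the paper's argument is shorter, reuses the existing classification of stationary states, and along the way exhibits the inclusion of exchangeable into stationary processes, which fits the paper's overall theme that the two symmetries collapse onto stationarity; yours is self-contained, in effect re-proving the relevant special case of the CFL classification on $\gam$, and it makes explicit exactly where positivity enters --- a point the paper's proof delegates entirely to the cited theorem. One minor remark: the paper's proof also verifies the converse inclusion (that each $(1-x)\om_\infty+x\om$ is $T_\s$-invariant), which the statement as written does not require and which you rightly omit.
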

\begin{proof}
We first notice the above convex combination is invariant under any $T_\s$, since the state at infinity and the vacuum state are both invariant.

Indeed, for the latter we first take $X_{(\l_1,\l_2)}$ an arbitrary element of the basis, and distinguish two cases.

\noindent 1) $(\l_1,\l_2)=(\emptyset,\emptyset)$, or $(\l_1,\l_2)=(\{i\},\{i\})$ for some integer $i$. Since any permutation $\s$ belongs to $OP(\l_1)\cap OP(\l_2)$, here one has
$$
\om\big(X_{(\l_1,\l_2)}\big)=1=\om\big(T_{\s}\big(X_{(\l_1,\l_2)}\big)\big)\,.
$$

\noindent 2) $(\l_1,\l_2)\neq(\emptyset,\emptyset)$, and $(\l_1,\l_2)\neq(\{i\},\{i\})$. Now $X_{(\l_1,\l_2)}$ is a $\l$-form with length at least 1, and then
$$
\om\big(X_{(\l_1,\l_2)}\big)=0\,.
$$
In this case, if $\s\in OP(\l_1)\cap OP(\l_2)$, one has that $T_\s\big(X_{(\l_1,\l_2)}\big)$ is a $\l$-form with the same length of $X_{(\l_1,\l_2)}$, and therefore $\om\big(T_{\s}\big(X_{(\l_1,\l_2)}\big)\big)=0$. When instead $\s\notin OP(\l_1)\cap OP(\l_2)$, one directly has $T_\s\big(X_{(\l_1,\l_2)}\big)=0$.
As a consequence, by linearity $\om\lceil_{\gam_o}\circ T_\s=\om\lceil_{\gam_o}$.

Now we show that if $\f\lceil_{\gam_o}$ is invariant under all $T_\s$, $\s\in\bp_\bz$, then $\f$ is shift invariant. In this case, the assertion will follow from Theorem 5.12 of \cite{CFL}. Indeed, denote $\f$ one of such states. By continuity of the shift, we can reduce again the matter to the Hamel basis of $\gam_o$. Take a generic element $X_{(\l_1,\l_2)}$. Then there exists a cycle $\s\in\bp_\bz$ (depending on the chosen element) such that, first $\s\in OP(\l_1)\cap OP(\l_2)$ trivially, and then
$X_{\t(\l_1),\t(\l_2)}=X_{\s(\l_1),\s(\l_2)}$. Denoting by $\a$ action of the one step shift, we get
\begin{align*}
\f\big(\a(X_{(\l_1,\l_2)})\big)=&\f(X_{\t(\l_1),\t(\l_1)})=\f(X_{\s(\l_1),\s(\l_1)})\\
=&\f\big(T_{\s}(X_{(\l_1,\l_2)})\big)=\f(X_{(\l_1,\l_2)})\,.
\end{align*}
\end{proof}
We end the section by noticing that it would be desirable to extend the maps $T_\s$ on the whole $C^*$-algebra as bounded maps. This step relies in proving
\begin{equation*}
\|T_\s\|<+\infty\,,\quad \s\in\bp_\bz\,.
\end{equation*}
On the other hand, its proof seems to be a quite complicated task, which perhaps is not needed for our analysis on exchangeability.

\bigskip

\section*{Acknowledgements}
\noindent The authors acknowledge the support of the italian INDAM-GNAMPA. The first and the second named authors kindly acknowledge also the Department of Physics of the University of Pretoria, where part of the results of the present paper were obtained, and in particular R. Duvenhage
for the hospitality and financial support. Finally, the authors are grateful to an anonymous referee whose comments improved the presentation of the paper.

\end{document}